\documentclass[abstracton,a4paper]{scrartcl}

\usepackage[headsepline]{scrpage2}
\usepackage[latin1]{inputenc}
\usepackage{amsfonts}
\usepackage{amsmath}
\usepackage{amssymb}
\usepackage{amsthm}

\usepackage{url}

\usepackage{graphicx}

\usepackage[pdftex,colorlinks,breaklinks,linkcolor=black,citecolor=black,filecolor=black,menucolor=black,urlcolor=black,pdfauthor={Lizhen Ji and Andreas Weber},pdftitle={Lp spectral theory and heat dynamics of locally symmetric spaces}, plainpages=false, pdfpagelabels, bookmarksnumbered=true]{hyperref}

\pagestyle{scrheadings}
\ihead{L.Ji, A. Weber: $L^p$ spectral theory and heat dynamics of locally symmetric spaces}

\pdfstringdefDisableCommands{\def\boldsymbol{} }

\newtheorem{theorem}{Theorem}[section]
\newtheorem{lemma}[theorem]{Lemma}
\newtheorem{proposition}[theorem]{Proposition}
\newtheorem{corollary}[theorem]{Corollary}

\theoremstyle{definition}
\newtheorem{definition}[theorem]{Definition}

\theoremstyle{remark}
\newtheorem{remark}[theorem]{\bf Remark}


\newcommand{\Hy}{\mathbb{H}}
\newcommand{\N}{\mathbb{N}}
\newcommand{\Q}{\mathbb{Q}}
\newcommand{\R}{\mathbb{R}}

\newcommand{\C}{\mathbb{C}}

\newcommand*\e{\mathrm{e}}

\newcommand*\re{\mathrm{Re}}
\newcommand*\im{\mathrm{Im}}

\newcommand*\grad{\mathrm{grad}}
\newcommand*\dive{\mathrm{div}}

\newcommand*\isom{\mathrm{Isom}}

\newcommand*\rank{\mathrm{rank}}

\newcommand*\dom{\mathrm{dom}}
\newcommand*\tr{\mathrm{tr}}
\newcommand*\spa{\mathrm{span}}



\newcommand\ad{\mathrm{ad}}



\newcommand\bP{{\bf P}}


\newcommand*\pddt{\frac{\partial}{\partial t}}

\newcommand*\DMp{\Delta_{M,p}}                 

\newcommand*\DM{\Delta_M}




\title{$\boldsymbol{L^p}$ spectral theory and heat dynamics of locally symmetric spaces}
\author{Lizhen Ji\footnote{Email: lji@umich.edu, 
					Address: 1834 East Hall, Ann Arbor, MI 48109-1043, USA.
		     Partially supported by NSF grant DMS 0604878}\\ 
					{\large Department of Mathematics, University of Michigan}
\and Andreas Weber\footnote{    Email: andreas.weber@math.uni-karlsruhe.de,
						   Address:  Englerstr. 2, 76128 Karlsruhe, Germany.}\\
						   {\large  Institut f\"ur Algebra und Geometrie,
						   Universit\"at Karlsruhe (TH)} }

\date{}

 \begin{document}

\maketitle 
\begin{abstract} In this paper we first  derive several results concerning the $L^p$ spectrum
 of locally symmetric spaces with rank one. In particular, we show that there
 is an open subset of $\C$ consisting of eigenvalues of the $L^p$ Laplacian if $p <2$ and that 
 corresponding eigenfunctions are given by certain Eisenstein series. On the other hand, if 
 $p>2$ there is at most a discrete set of real eigenvalues  of the $L^p$ Laplacian. These results
 are used in the second part of this paper in order to show that the dynamics of the $L^p$
 heat semigroups for $p<2$ is very different from the dynamics of the $L^p$ heat semigroups
 if $p\geq 2$.\\
 
         \noindent{\em Keywords:} Locally symmetric spaces, $L^p$ heat semigroups, 
          Eisenstein series, $L^p$ spectrum, chaotic semigroups. \\[1mm]
\end{abstract}		

\section{Introduction}

The purpose of this paper is twofold. We are first concerned with the $L^p$ spectrum of
the Laplace-Beltrami operator on locally symmetric spaces with rank one and then
we will use the obtained results about the $L^p$ spectrum in order to show that the dynamics of the $L^p$ heat semigroups for $p<2$ is very different from the dynamics of the $L^p$ heat semigroups
if $p\geq 2$.
 
In contrast to the $L^2$ spectrum of the Laplace-Beltrami operator on locally symmetric spaces the 
$L^p$ spectrum (as set), $p\in [1,\infty)$, is only known in special situations. But there are several reasons to study also the $L^p$ spectrum.\\ 
Firstly, from a physical point of view, the natural space to study heat diffusion is $L^1$:
If the function $u(t,\cdot)\geq 0$ denotes the heat distribution at the time $t$, the total amount of heat 
in some region $\Omega$ is given by the $L^1$ norm of $u(t,\cdot)|_{\Omega}$ and hence, the $L^1$ norm has a physical meaning. But, on the other hand, $L^1$ is more difficult to handle than the
reflexive $L^p$ spaces ($p>1$) as the heat semigroup on $L^p$ ($p>1$) is  always bounded analytic
whereas this is in general not true for the heat semigroup on $L^1$.\\ 
Secondly, there are already many results for differential operators on domains of 
euclidean space concerning various aspects of $L^p$ spectral theory, see e.g.
\cite{MR1269649,MR1103113,MR1345724,MR1458713,MR836002,MR869525,MR1602267,MR1673414,MR1824257,MR1224619,MR1420468}.

Another point to mention is the fact that the question whether the $L^p$ spectrum of the Laplace-Beltrami operator on a Riemannian manifold depends non-trivially on $p$ is related to the geometry, in particular the volume growth, of the respective manifold \cite{MR1250269}.
But even if one is only interested in the $L^2$ case, it could be worth studying the $L^p$
spectra, as the knowledge of the $L^p$ spectra yields in some cases further information on
the decay of $L^2$ eigenfunctions  of the Laplace-Beltrami operator  
(see e.g. \cite{MR937635,Ji:2007fk,MR1016445}). Furthermore, 
in \cite{MR1470419} J. Wang was interested in the $L^2$ spectrum of the Laplacian on 
complete Riemannian manifolds with non-negative Ricci curvature. In order to calculate this spectrum he first calculated the $L^1$ spectrum and then used a result of Sturm \cite{MR1250269} to show that the $L^p$ spectrum does not depend on $p$. 

And finally, in the theory of (locally) symmetric spaces there are new phenomena  that cannot   occur in the $L^2$ case. One example for this is
the fact that the $L^p$ spectrum ($p>2$) of  a symmetric space of non-compact type contains an
uncountable number of eigenvalues of the  Laplace-Beltrami operator on $L^p$. More precisely, 
we have  the following result by Taylor:
\begin{theorem}[cf. \cite{MR1016445}]\label{taylor}
	Let $X$ denote a symmetric space of non-compact type. Then  for any 
	$p\in [1,\infty)$ we have 
	$$\sigma(\Delta_{X,p}) = P_{X,p},$$
	where
	$$P_{X,p}=\left\{ ||\rho||^2 - z^2 : z\in\C, |\re z|\leq ||\rho||\cdot |\frac{2}{p}-1|\right\}.$$
	Furthermore, if $p>2$ any point in the interior of the parabolic region $P_{X,p}$ is an 
	eigenvalue for $\Delta_{X,p}$ and eigenfunctions corresponding to these eigenvalues
    are given by spherical functions.
\end{theorem}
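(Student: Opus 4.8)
The plan is to diagonalise $\DX$ with the spherical Fourier transform and to read the spectrum off the resulting multiplier. Write $X=G/K$ with $G$ semisimple of noncompact type, fix a maximal abelian subspace $\mathfrak{a}$ in the Cartan decomposition, and let $\rho$ be the half sum of the positive roots. Since $\DX$ is $G$-invariant, its resolvent and heat semigroup act on $K$-invariant data by convolution, and under the spherical transform the Laplacian becomes multiplication by the symbol $\lambda\mapsto \langle\lambda,\lambda\rangle+||\rho||^2$, $\lambda\in\mathfrak{a}^*$. On $L^2$ this already gives $\sigma(\DX)=[\,||\rho||^2,\infty)$. Putting $z=\ii\lambda$ turns the symbol into $||\rho||^2-z^2$, the form occurring in $P_{X,p}$, and identifies the strip $|\re z|\le ||\rho||\,|\tfrac{2}{p}-1|$ with the tube $T_p=\{\lambda\in\mathfrak{a}^*_{\C}:|\im\lambda|\le ||\rho||\,|\tfrac{2}{p}-1|\}$.

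For the inclusion $\sigma(\DXp)\subseteq P_{X,p}$ I would take $\mu\notin P_{X,p}$ and study the resolvent, which is convolution by the kernel whose spherical transform is the multiplier $m_\mu(\lambda)=(\langle\lambda,\lambda\rangle+||\rho||^2-\mu)^{-1}$. The condition $\mu\notin P_{X,p}$ is exactly the statement that the denominator does not vanish on the closed tube $T_p$, so $m_\mu$ is holomorphic and, together with the requisite H\"ormander-type derivative bounds, bounded on $T_p$. Feeding this into the $L^p$ Fourier-multiplier theorem on symmetric spaces (of Clerc--Stein and Anker type, which asserts that holomorphy and such bounds on $T_p$ force $L^p$-boundedness of the associated convolution operator) shows that the resolvent is bounded, so $\mu$ lies in the resolvent set.

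The reverse inclusion $P_{X,p}\subseteq\sigma(\DXp)$ is obtained from spherical functions, which also yields the final assertion. For $\mu=||\rho||^2-z^2$ choose $\lambda$ with $z=\ii\lambda$ and consider $\phi_\lambda$, which satisfies $\DX\phi_\lambda=\mu\,\phi_\lambda$. Harish-Chandra's estimate $|\phi_\lambda(\e^{H})|\lesssim (1+|H|)^N \e^{(\im\lambda-\rho)(H)}$ on the positive chamber, weighed against the volume growth $\e^{2\rho(H)}$, shows that $\phi_\lambda\in L^p(X)$ precisely when $p\,\im\lambda-(p-2)\rho<0$ on the chamber, i.e.\ when $|\re z|<||\rho||\,|\tfrac{2}{p}-1|$ and $p>2$. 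Hence for $p>2$ every interior point of $P_{X,p}$ is a genuine eigenvalue of $\DXp$ with eigenfunction $\phi_\lambda$, and since the spectrum is closed the whole region $P_{X,p}$ is contained in it. For $p\le 2$ the functions $\phi_\lambda$ are no longer $p$-integrable, so one replaces them by suitably truncated Weyl (singular) sequences built from the $\phi_\lambda$ with $z$ in the boundary strip, which still exhibit every $\mu\in P_{X,p}$ as approximate spectrum.

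The main obstacle is the multiplier theorem invoked in the second step: controlling the $L^p\to L^p$ norm of the resolvent kernel requires the sharp analysis of the spherical transform on the tube $T_p$, equivalently sharp pointwise bounds for the resolvent and heat kernels combined with the Kunze--Stein phenomenon, and it is exactly the width $||\rho||\,|\tfrac{2}{p}-1|$ of the admissible tube that produces the $p$-dependent parabola. By comparison the construction of eigenfunctions is a relatively direct consequence of the Harish-Chandra estimates, once the integrability against the exponential volume growth is tallied carefully.
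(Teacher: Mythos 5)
The paper offers no proof of this statement --- it is quoted as a known result of Taylor \cite{MR1016445} (with the rank-one resolvent estimates due to Lohou\'e--Rychener and the necessity of the tube condition going back to Clerc--Stein) --- so the only fair comparison is with the cited proof, and your outline is in substance that argument: diagonalize via the spherical transform, get $\sigma(\Delta_{X,p})\subseteq P_{X,p}$ from an $L^p$ multiplier theorem for functions holomorphic with symbol estimates on a tube, and get the reverse inclusion plus the eigenfunctions from the Harish--Chandra asymptotics of $\phi_\lambda$ weighed against the volume growth $\e^{2\rho(H)}$. Three points need repair before this is a complete reconstruction. First, in rank $\geq 2$ the admissible tube is $\mathfrak{a}^*+\ii\,\mathrm{conv}\bigl(W\cdot(\tfrac{2}{p}-1)\rho\bigr)$, the convex hull of the Weyl group orbit, not the Euclidean ball of radius $||\rho||\,|\tfrac{2}{p}-1|$; the image of the symbol $||\rho||^2+\langle\lambda,\lambda\rangle$ on that tube does equal $P_{X,p}$, but this identification (the extreme points of the parabola come from the Weyl orbit of $(\tfrac{2}{p}-1)\rho$ itself) is a computation you have silently skipped, and the multiplier theorem you invoke is stated for precisely this Weyl-invariant tube. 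Second, showing that a smooth $L^p$ function with $\Delta\phi_\lambda=\mu\phi_\lambda$ is an honest eigenfunction of the semigroup generator $\Delta_{X,p}$ is not automatic, since $\Delta_{X,p}$ is defined by extension of the $L^2$ heat semigroup rather than as a differential operator on a maximal domain; one needs the Strichartz uniqueness argument that the present paper isolates as Corollary \ref{corollary Lp eigenfunctions}, and the authors prove that corollary exactly because this step is not obvious. Third, for $1<p\leq 2$ your truncated Weyl sequences are an unnecessary detour: $\sigma(\Delta_{X,p})=\sigma(\Delta_{X,p'})$ by duality and $P_{X,p}=P_{X,p'}$, which settles the reflexive range at once (only $p=1$ requires a separate limiting argument). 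With these three points filled in, your sketch is a faithful version of the proof the paper cites rather than a genuinely different route.
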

For $p=2$ we have $P_{X,2}= [||\rho||^2,\infty)$ and $\Delta_{X,2}$ has no eigenvalues
(for a definition of $\rho$ we refer to Section \ref{symmetric spaces}).
This follows easily by using the Helgason-Fourier transform which turns $\Delta_{X,2}$
into a multiplication operator. 

The $L^p$ spectrum of certain locally symmetric spaces was examined in various articles:
In \cite{MR937635} Davies, Simon, and Taylor determined the $L^p$ spectrum of 
real hyperbolic spaces and certain geometrically finite quotients 
$\Gamma\backslash \Hy^n$. For symmetric spaces of non-compact type with rank one  the results by Lohou\'e and Rychener in \cite{MR689073} should be mentioned. They derived estimates of the
resolvent $(\Delta - z)^{-1}$ of the Laplacian on $L^p$ spaces. More precisely, in the rank one case,
all the complex numbers $z$ are determined such that the respective resolvent is a bounded operator
on $L^p$. 
More general locally symmetric spaces are treated in  \cite{MR1016445,MR2342629} but a precise identification of the $L^p$ spectrum (as set) was only obtained if the universal covering is a rank $1$ symmetric space. Related results are also contained in the classical paper \cite{MR0367561} by Clerc and Stein. \\ 

One of the aims in this paper is to derive similar results as in Theorem \ref{taylor} for  locally 
symmetric spaces $M$ with rank one. For this, we first establish some $L^p$ estimates of
Eisenstein series which are generalized eigenfunctions for the Laplacian and therefore, they can
be regarded as analog to the spherical functions on symmetric spaces of non-compact type. Our
estimates show that plenty of the Eisenstein series are contained in $L^p(M)$ if $1\leq p<2$.
From this result in turn, it will follow that these Eisenstein series are actually (honest) eigenfunctions
for the $L^p$ Laplacian, $1< p<2$.

A point to mention here is that, compared to symmetric spaces, the roles $p<2$ and $p>2$ are 
interchanged. This means that the interior of a similarly defined parabolic region $P_{M,p}$ consists
(besides a discrete set) of eigenvalues if $1<p<2$. If $p\geq 2$ there is only a discrete set of real
eigenvalues possible.

In the second part of this paper we investigate the dynamics of the $L^p$ heat semigroups
$$
 e^{-t\DMp}: L^p(M) \to L^p(M)
$$
on locally symmetric spaces $M$ with rank one and on products of rank one spaces.
As all these spaces $M$ have finite volume, it follows from H\"older's inequality
$L^q(M)\hookrightarrow L^p(M)$ if $p\leq q$ and hence, 
the semigroup $e^{-t\DMp}$ can be regarded as an extension of the semigroup 
$e^{-t\Delta_{M,q}}$ (see Section \ref{heat semigroup} for further details). In this part, we will make use of the results concerning the $L^p$ spectrum in order to show that the $L^p$ heat semigroups on 
locally symmetric spaces with rank one have a certain chaotic behavior if $1<p<2$. This contrasts
the fact that such a behavior is not possible for the $L^p$ heat semigroups if $p\geq 2$.
One reason for this is that the spaces $L^p(M)$ become larger and larger if $p\downarrow 1$
and hence, there is more space for potential chaotic behavior available. In particular, if $p<2$ there
are suddenly plenty of Eisenstein series contained in $L^p(M)$. 

If the ($\Q$-)rank of a locally symmetric space with finite volume is greater than one  the theory of Eisenstein series is more complicated than in the case of rank one but similar results hold in this case, too. 
A first step into this direction is Proposition \ref{products} where  Riemannian products of locally symmetric spaces with rank one are treated. 
The general case is more difficult to work out but as the asymptotic behavior of an automorphic form 
at infinity  is controlled by its constant term a similar result as Proposition \ref{prop main}
can be proven.  We plan to treat the higher $\Q$-rank case in a
future publication.

Analogous results for symmetric spaces of non-compact type have been obtained in \cite{Ji:nr}.

\section{Preliminaries}

\subsection{The heat semigroup on $\boldsymbol{L^p}$ spaces}\label{heat semigroup}

In this section we denote by $M$ an arbitrary complete Riemannian manifold and by
$\Delta=-\dive(\grad)$ the Laplace-Beltrami operator acting on differentiable functions of $M$.
If we denote by $\DM$ the Laplacian with domain $C_c^{\infty}(M)$ (the set of differentiable functions with compact support), this is an essentially self-adjoint operator and hence, 
its closure $\Delta_{M,2}$ is a self-adjoint operator on the Hilbert space $L^2(M)$. 
Since $\Delta_{M,2}$ is positive,
$-\Delta_{M,2}$ generates a bounded analytic semigroup $e^{-t\Delta_{M,2}}$ on $L^2(M)$ 
which can be defined
by the spectral theorem for unbounded self-adjoint operators. The semigroup $e^{-t\Delta_{M,2}}$
is a {\em submarkovian semigroup} (i.e., $e^{-t\Delta_{M,2}}$ is positive and a contraction on 
$L^{\infty}(M)$
for any $t\geq 0$) and we therefore have the following:
\begin{itemize}
\item[(1)] The semigroup $e^{-t\Delta_{M,2}}$ leaves the set $L^1(M)\cap L^{\infty}(M)\subset L^2(M)$ 
		invariant and hence,
		$e^{-t\Delta_{M,2}}|_{L^1\cap L^{\infty}}$ may be extended to a positive contraction semigroup
		$T_p(t)$ on $L^p(M)$ for any $p\in [1,\infty]$.
		 These semigroups are strongly continuous if $p\in [1,\infty)$ and {\em consistent}
		 in the sense that $T_p(t)|_{L^p\cap L^q} = T_q(t)|_{L^p\cap L^q}$. 
\item[(2)] Furthermore, if $p\in (1,\infty)$, the semigroup $T_p(t)$ is a bounded analytic semigroup
		with angle of analyticity $\theta_p \geq \frac{\pi}{2} - \arctan\frac{|p-2|}{2\sqrt{p-1}}$.	
\end{itemize} 
For a proof of (1) we refer to \cite[Theorem 1.4.1]{MR1103113}. For (2) see \cite{MR1224619}.
In general, the semigroup $T_1(t)$ needs not be analytic. However, if $M$ has bounded geometry
$T_1(t)$ is analytic in {\em some} sector (cf. \cite{MR924464,MR1023321}).

In the following, we denote by $-\DMp$ the generator of $T_p(t)$ and by $\sigma(\DMp)$ the spectrum of $\DMp$. Furthermore, we will write
$e^{-t\DMp}$ for the semigroup $T_p(t)$.
Because of (2) from above, 
the $L^p$-spectrum $\sigma(\DMp)$ has to be contained in the sector
\begin{multline*}
\left\{ z\in \C\setminus\{0\} : |\arg(z)| \leq \frac{\pi}{2}-\theta_p\right\}\cup\{0\} \subset\\
     \left\{ z\in \C\setminus\{0\} : |\arg(z)| \leq \arctan\frac{|p-2|}{2\sqrt{p-1}} \right\}\cup\{0\}.
\end{multline*}     

If we identify as usual the dual space of $L^p(M), 1\leq p<\infty$, with 
$L^{p'}(M), \frac{1}{p}+\frac{1}{p'}=1$, the dual operator of $\DMp$ equals $\Delta_{M,p'}$
and therefore we always have $\sigma(\DMp) = \sigma(\Delta_{M,p'})$. 
It should also be mentioned that the family $\DMp, p\geq 1,$ is also consistent, which means that
the restrictions of $\DMp$ and $\Delta_{M,q}$ to the intersection of their domains coincide:
\begin{lemma}\label{lemma consistent}
If $p,q \in [1,\infty)$, the operators $\DMp$ and $\Delta_{M,q}$ are consistent, i.e.
$$ 
\DMp f = \Delta_{M,q} f\qquad\mbox{for any~} f\in \dom(\DMp)\cap\dom(\Delta_{M,q}).
$$
\end{lemma}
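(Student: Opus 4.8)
The plan is to prove that the two closed operators agree on the intersection of their domains by exploiting the consistency of the semigroups already recorded in item (1). The essential principle is that a strongly continuous semigroup is uniquely determined by its generator, and conversely the generator is determined by the semigroup via its differential at $t=0$.

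First I would recall that both $-\DMp$ and $-\Delta_{M,q}$ are the generators of the semigroups $T_p(t)$ and $T_q(t)$ respectively, and that item (1) of the preliminaries asserts $T_p(t)|_{L^p\cap L^q} = T_q(t)|_{L^p\cap L^q}$. The generator is recovered from the semigroup by the strong limit
\begin{equation*}
  \DMp f = \lim_{t\downarrow 0}\frac{f - T_p(t)f}{t},
\end{equation*}
and analogously for $\Delta_{M,q}$. So let $f\in\dom(\DMp)\cap\dom(\Delta_{M,q})$; in particular $f\in L^p(M)\cap L^q(M)$, so the difference quotients $\frac{f-T_p(t)f}{t}$ and $\frac{f-T_q(t)f}{t}$ coincide as elements of $L^p\cap L^q$ for every $t>0$ by consistency. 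The left-hand quotient converges in $L^p$ to $\DMp f$ (since $f\in\dom(\DMp)$), while the right-hand quotient converges in $L^q$ to $\Delta_{M,q}f$ (since $f\in\dom(\Delta_{M,q})$).

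The remaining step is to identify these two limits. Both $\DMp f$ and $\Delta_{M,q}f$ are honest functions, and I would argue that $L^p$-convergence and $L^q$-convergence of the \emph{same} sequence of functions force the two limits to agree almost everywhere. Concretely, pass to a sequence $t_n\downarrow 0$; since the common difference quotients converge in $L^p$, a subsequence converges pointwise almost everywhere to $\DMp f$, and along a further subsequence the $L^q$-convergence yields pointwise almost-everywhere convergence to $\Delta_{M,q}f$. Uniqueness of the pointwise limit then gives $\DMp f = \Delta_{M,q}f$ almost everywhere, which is the claim.

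I expect the only subtlety to be the last identification of limits in two different norm topologies, which is why I would route it through pointwise a.e.\ convergence along subsequences rather than trying to compare $L^p$ and $L^q$ limits directly. Everything else is a formal consequence of the semigroup consistency in item (1) together with the standard generator formula, so there is no genuine analytic obstacle here; the statement is essentially a bookkeeping lemma ensuring that the notation $\DMp$ is compatible across different values of $p$.
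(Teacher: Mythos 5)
Your proof is correct, and it follows the paper's skeleton exactly up to the last step: both arguments start from the consistency of the semigroups recorded in item (1), write the generators as strong limits of the (common) difference quotients, and are then left with the task of identifying an $L^p$-limit with an $L^q$-limit of the same sequence of functions. Where you diverge is precisely at that identification. You extract a sequence $t_n\downarrow 0$ and pass to successive subsequences converging pointwise almost everywhere, invoking the standard fact that norm convergence in $L^r$ yields a.e.\ convergence along a subsequence; uniqueness of pointwise limits then closes the argument. The paper instead observes that $\DMp f-\Delta_{M,q}f$ lies in the sum space $L^p(M)+L^q(M)$, which is a Banach space for the infimum norm, and estimates
$$\|\DMp f-\Delta_{M,q}f\|_{L^p+L^q}\le \bigl\|\tfrac1t(\e^{-t\DMp}f-f)+\DMp f\bigr\|_{L^p}+\bigl\|\tfrac1t(\e^{-t\Delta_{M,q}}f-f)+\Delta_{M,q}f\bigr\|_{L^q}\to 0.$$
Both routes are valid and short. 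Yours is more elementary, using only measure theory and no auxiliary function space; the paper's is more structural, embedding both topologies into a single Hausdorff normed space where the limit is unique by general principles, which avoids any mention of subsequences or pointwise representatives. Nothing is missing from your argument.
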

\begin{proof}
 Since the semigroups $\e^{-t\DMp}$ and $\e^{-t\Delta_{M,q}}$ are consistent, we have for\linebreak
  $f\in\dom(\DMp) \cap \dom(\Delta_{M,q})$: 
 $$ \frac{1}{t}\left( \e^{-t\DMp}f - f \right) \xrightarrow{||\cdot||_{L^p}}
 	-\DMp f\quad (t \downarrow0)$$
and	
 $$ \frac{1}{t}\left( \e^{-t\DMp}f - f \right) =
 	\frac{1}{t}\left( \e^{-t\Delta_{M,q}}f - f \right)\xrightarrow{||\cdot||_{L^q}} 
 	-\Delta_{M,q} f\quad (t \downarrow0).$$
Furthermore,
$$ \DMp f - \Delta_{M,q} f \,\in\, L^p(M) + L^q(M)$$
and $L^p(M) + L^q(M) = \{h_1 + h_2 : h_1 \in L^p(M), h_2 \in L^q(M)\}$ is a Banach space for the norm
\begin{multline*}
||g||_{L^p+L^q} :=\\ 
	\inf\left\{ ||h_1||_{L^p} + ||h_2||_{L^q} : h_1\in L^p(M), h_2\in L^q(M)
					\mbox{~with~} g= h_1+h_2 \right\}.
\end{multline*}					
 In particular, we obtain
 \begin{multline*}
  || \DMp f - \Delta_{M,q} f ||_{L^p+L^q} \leq \\
    || \frac{1}{t}\left( \e^{-t\DMp}f - f \right) + \DMp f ||_{L^p} +
    || \frac{1}{t}\left( \e^{-t\Delta_{M,q}}f - f \right) + \Delta_{M,q}f ||_{L^q}\\ 
    \longrightarrow 0\quad (t\downarrow 0). 
 \end{multline*}					
\end{proof}
If $M$ is Riemannian manifold with finite volume we have by H\"older's inequality
$L^q(M) \hookrightarrow L^p(M)$ for $1\leq p\leq q\leq \infty$. Hence, by consistency, the 
semigroup $e^{-t\Delta_{M,p}}$ (resp. $\DMp$) can be regarded as extension of the semigroup
$e^{-t\Delta_{M,q}}$ (resp. $\Delta_{M,q}$), $p\leq q$.

We conclude this subsection with a general result that will be needed later. For this we first
recall a uniqueness result of $L^p$ solutions of the heat equation by Strichartz, cf. 
\cite[Theorem 3.9]{MR705991}.
\begin{theorem}\label{thm Strichartz}
 Let $v: (0,\infty)\times M\to \R$ denote a differentiable 
 solution of the heat equation $\pddt u = -\Delta u$ with $v(t,\cdot) \in L^p(M)$ for each $t>0$
 and 
 $||v(t,\cdot)||_{L^p}\leq C e^{Dt}$ for some constants $C$ and $D$, $p\in (1,\infty)$. Then there
 is a function $f\in L^p(M)$ with 
 $$
 v(t,x) = \left(e^{-t\DMp}f\right)(x).
 $$
\end{theorem}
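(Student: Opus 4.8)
The plan is to prove this by a duality argument, using that for $p\in(1,\infty)$ the dual semigroup $\e^{-t\Delta_{M,p'}}$ (with $\frac{1}{p}+\frac{1}{p'}=1$) is bounded analytic on the reflexive space $L^{p'}(M)$. The idea is to pair the given solution $v$ against a test function that has been evolved backward in time by the dual semigroup, and to show that this pairing is independent of time; the two time-endpoints will then identify $v(t,\cdot)$ with $\e^{-t\DMp}f$ for a suitable $f$.

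Concretely, fix $t>0$ and $\phi\in C_c^\infty(M)$, and for $0<s<t$ set
$$F(s):=\int_M v(s,x)\,\bigl(\e^{-(t-s)\Delta_{M,p'}}\phi\bigr)(x)\dx.$$
The first step is to show $F$ is constant. Differentiating in $s$ and using on the one hand $\pddt v=-\DM v$ and on the other hand that $\psi(s):=\e^{-(t-s)\Delta_{M,p'}}\phi$ solves $\partial_s\psi=\Delta_{M,p'}\psi$, the two contributions cancel after integrating by parts, giving $F'(s)=0$. The analyticity of $\e^{-t\Delta_{M,p'}}$ is what makes this rigorous: for $s<t$ the function $\psi(s)$ is smooth, lies in the domain of every power of $\Delta_{M,p'}$, and decays fast enough that the pairing $\int_M (\DM v)\,\psi=\int_M v\,(\Delta_{M,p'}\psi)$ is legitimate and the differentiation may be carried under the integral.

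The second step is to evaluate the two endpoints. As $s\uparrow t$ one has $\psi(s)\to\phi$ in $L^{p'}(M)$, so $F(s)\to\int_M v(t,x)\phi(x)\dx$. For the limit $s\downarrow 0$ I would use the growth hypothesis: since $||v(s,\cdot)||_{L^p}\le C\e^{Ds}\le C\e^{D}$ for $0<s\le 1$, the family $\{v(s,\cdot)\}$ is bounded in $L^p(M)$, and because $L^p(M)$ is reflexive for $p\in(1,\infty)$ there is a sequence $s_n\downarrow 0$ and an $f\in L^p(M)$ with $v(s_n,\cdot)\rightharpoonup f$ weakly. Along this sequence $F(s_n)\to\int_M f\,(\e^{-t\Delta_{M,p'}}\phi)\dx=\int_M(\e^{-t\DMp}f)\,\phi\dx$, using the duality $(\e^{-t\DMp})^*=\e^{-t\Delta_{M,p'}}$. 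Constancy of $F$ then yields $\int_M v(t,x)\phi(x)\dx=\int_M(\e^{-t\DMp}f)(x)\,\phi(x)\dx$ for every $\phi\in C_c^\infty(M)$, whence $v(t,\cdot)=\e^{-t\DMp}f$.

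The main obstacle I anticipate is the rigorous justification of $F'(s)=0$, that is, the integration by parts $\int_M(\DM v)\,\psi=\int_M v\,\Delta_{M,p'}\psi$ together with the interchange of differentiation and integration. This is where the smoothing and decay coming from analyticity of the dual semigroup are essential, and where one must be careful that $v$ is only assumed differentiable with an $L^p$ bound, so all the favorable behaviour has to be placed on the backward-evolved test function $\psi$ rather than on $v$ itself. A secondary point is that the datum $f$ is a priori only extracted along a subsequence; since the final identity determines $v(t,\cdot)$ uniquely and $\e^{-t\DMp}$ is injective, the weak limit $f$ is in fact independent of the chosen sequence.
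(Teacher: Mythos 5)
First, a point of comparison: the paper does not prove this statement at all --- it is quoted from Strichartz \cite[Theorem 3.9]{MR705991} --- so there is no internal proof to measure your argument against; I can only assess it on its own terms. Your skeleton (pair $v(s,\cdot)$ against the backward-evolved test function $\psi(s)=\e^{-(t-s)\Delta_{M,p'}}\phi$, show the pairing $F(s)$ is constant, then use reflexivity of $L^p(M)$ to extract $f$ as a weak limit) is the right one, and your handling of the two endpoints is correct: as $s\uparrow t$ you combine strong continuity of the dual semigroup with the boundedness of $\|v(s,\cdot)\|_{L^p}$ and local uniform convergence of $v$ on $\supp\phi$; as $s\downarrow 0$ a weak limit paired against the strong limit $\psi(s_n)\to\e^{-t\Delta_{M,p'}}\phi$ does the job.

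The genuine gap is exactly where you located it, and the repair you propose does not close it. To get $F'(s)=0$ you must justify $\int_M(\Delta v)\,\psi\dx=\int_M v\,(\Delta_{M,p'}\psi)\dx$, and two things go wrong on a complete non-compact $M$. First, the left-hand side need not even be defined: the hypotheses give $v(s,\cdot)\in L^p(M)$ but say nothing about $\Delta v(s,\cdot)=-\partial_s v(s,\cdot)$, which is not assumed to lie in $L^p(M)$, so its pairing with $\psi(s)\in L^{p'}(M)$ may diverge. Second, Green's identity produces boundary terms at infinity involving $\nabla v$ and $\nabla\psi$; analyticity of the dual semigroup puts $\psi(s)$ in $\dom(\Delta_{M,p'}^{k})$ with $L^{p'}$ bounds for every $k$, but it gives no pointwise decay of $\psi$ or $\nabla\psi$ at infinity and, of course, no control whatsoever on $\nabla v$ --- so ``placing all the favourable behaviour on $\psi$'' cannot be achieved by analyticity alone. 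The standard way to make this rigorous is to first record the time-integrated weak formulation $\langle v(s_2),\chi\rangle-\langle v(s_1),\chi\rangle=-\int_{s_1}^{s_2}\langle v(\sigma),\Delta\chi\rangle\,d\sigma$ for $\chi\in C_c^{\infty}(M)$, which is legitimate because $v$ is a classical solution and $\chi$ has compact support, and only then extend it to $\chi=\psi(\sigma)\in\dom(\Delta_{M,p'})$ using the exponential $L^p$ bound together with a statement identifying the generator $\Delta_{M,p'}$ with (the closure of) the Laplacian on $C_c^{\infty}(M)$. That identification on a complete manifold is itself a nontrivial theorem and is precisely the technical content of Strichartz's paper that your argument silently presupposes; without it, the step $F'(s)=0$ is unproved.
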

\begin{corollary}\label{corollary Lp eigenfunctions}
 Let $p\in (1,\infty)$ and $f: M\to \R$ denote a differentiable function such that $f\in L^p(M)$ and 
 $\Delta f = \mu f$ for some $\mu\in\R$.  Then $f\in \dom(\DMp)$ and $\DMp f = \mu f$.
\end{corollary}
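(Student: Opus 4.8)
The plan is to reduce the statement to Strichartz's uniqueness theorem (Theorem~\ref{thm Strichartz}) by exhibiting an explicit separated solution of the heat equation built from $f$. First I would set
$$
 v(t,x) := \e^{-\mu t} f(x), \qquad (t,x)\in(0,\infty)\times M.
$$
Since $f$ satisfies $\Delta f = \mu f$ (and, by elliptic regularity applied to this equation, is in fact smooth, so that $v$ is a genuine differentiable function with $v(t,\cdot)\in L^p(M)$ for every $t>0$), a direct computation gives
$$
 \pddt v = -\mu\,\e^{-\mu t} f = -\e^{-\mu t}\Delta f = -\Delta v,
$$
so $v$ solves the heat equation. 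Moreover $\|v(t,\cdot)\|_{L^p} = \e^{-\mu t}\|f\|_{L^p} \leq \|f\|_{L^p}\,\e^{|\mu| t}$, which is precisely the admissible growth $C\e^{Dt}$ required in Theorem~\ref{thm Strichartz} (with $C=\|f\|_{L^p}$ and $D=|\mu|$; the sign of $\mu$ is harmless).

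Applying Strichartz's theorem then produces a function $g\in L^p(M)$ with $v(t,x)=\left(\e^{-t\DMp}g\right)(x)$ for all $t>0$. The next step is to identify $g$ with $f$. Letting $t\downarrow 0$, strong continuity of the semigroup gives $\e^{-t\DMp}g \to g$ in $L^p(M)$, while on the other hand $v(t,\cdot)=\e^{-\mu t}f \to f$ in $L^p(M)$; hence $g=f$ and therefore
$$
 \e^{-t\DMp}f = \e^{-\mu t} f \qquad\text{for all } t>0.
$$

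Finally I would read off the conclusion from the definition of the generator $-\DMp$ of $\e^{-t\DMp}$. Dividing by $t$ gives
$$
 \frac{1}{t}\left(\e^{-t\DMp}f - f\right) = \frac{\e^{-\mu t}-1}{t}\, f \xrightarrow{\|\cdot\|_{L^p}} -\mu f \qquad (t\downarrow 0),
$$
so by the very definition of the generator we obtain $f\in\dom(\DMp)$ and $\DMp f = \mu f$, as claimed. The only genuinely delicate point is checking the hypotheses of Theorem~\ref{thm Strichartz}: one must ensure that $v$ is differentiable in the required sense (this is where smoothness of the eigenfunction, via elliptic regularity, enters) and that the exponential growth bound holds, both of which are immediate here. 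Everything after the invocation of Strichartz's theorem is a soft limiting argument using only strong continuity and the definition of the semigroup generator.
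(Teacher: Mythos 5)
Your argument is correct and follows essentially the same route as the paper's own proof: both define $v(t,x)=\e^{-\mu t}f(x)$, invoke Theorem~\ref{thm Strichartz} to conclude $\e^{-t\DMp}f=\e^{-\mu t}f$, and then read off $\DMp f=\mu f$ from the definition of the generator. You merely spell out details the paper leaves implicit (the identification $g=f$ via strong continuity, the growth bound, and smoothness via elliptic regularity), which is fine.
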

\begin{proof}
 We put 
 $$
  v(t,x) = e^{-\mu t}f(x).
 $$
 Then the conditions of Theorem \ref{thm Strichartz} are obviously fulfilled and hence,
 $e^{-t\DMp}f (x) =  e^{-\mu t}f(x)$.
 From this it follows 
 $$
 || \frac{1}{t}(e^{-t\DMp}f -f) + \mu f||_{L^p}\to 0\quad (t\to 0)
 $$ 
 and therefore $\DMp f = \mu f$.
\end{proof}
Note, that it is not completely obvious that a differentiable $L^p$ function $f$ that 
satisfies the eigenequation $\Delta f = \mu f$ is  contained in the domain of $\DMp$.  
The purpose of Corollary \ref{corollary Lp eigenfunctions} was to show exactly this.  \\
We do not know whether Theorem \ref{thm Strichartz} and 
Corollary \ref{corollary Lp eigenfunctions} are true in the case $p=1$, too. Therefore, we need to restrict
ourselves in some situations below to the case $p\in (1,\infty)$.    Note also, that if $p=\infty$
a uniqueness result as Theorem \ref{thm Strichartz} cannot hold as there are Riemannian 
manifolds on which non-constant solutions $v(t,x)$ of the heat equation exist such that 
$v(0,x)=1$, cf. \cite[Proposition 7.9]{MR0356254}.

\subsection{Locally symmetric spaces}\label{symmetric spaces}

Let $X$ denote always a symmetric space of non-compact type. Then
$G:= \isom^0(X)$ is a non-compact, semi-simple Lie group with trivial center 
that acts transitively on $X$ and $X=G/K$, where $K\subset G$ is a maximal 
compact subgroup of $G$. We denote
the respective Lie algebras by $\mathfrak{g}$ and $\mathfrak{k}$. Given a corresponding Cartan
involution $\theta: \mathfrak{g}\to\mathfrak{g}$ we obtain the Cartan decomposition
$\mathfrak{g}=\mathfrak{k}\oplus\mathfrak{p}$ of $\mathfrak{g}$ into the eigenspaces of $\theta$. The subspace
$\mathfrak{p}$ of $\mathfrak{g}$ can be identified with the tangent space $T_{eK}X$. We assume,
that the Riemannian metric $\langle\cdot,\cdot\rangle$ of $X$ in $\mathfrak{p}\cong T_{eK}X$ 
coincides with the restriction of the Killing form 
$B(Y,Z) := \tr(\ad Y\circ \ad Z ), Y, Z\in \mathfrak{g},$ to $\mathfrak{p}$. 

For any maximal abelian subspace $\mathfrak{a}\subset \mathfrak{p}$ we refer to 
$\Sigma=\Sigma(\mathfrak{g},\mathfrak{a})$ as the set of restricted roots for the pair $(\mathfrak{g},\mathfrak{a})$,
i.e. $\Sigma$ contains all $\alpha\in \mathfrak{a}^*\setminus\{0\}$ such that
$$ \mathfrak{h}_{\alpha} := \{ Y\in \mathfrak{g} : \ad(H)(Y) = \alpha(H)Y \mbox{~for all~} H\in\mathfrak{a} \}\neq \{0\}.$$
These subspaces $ \mathfrak{h}_{\alpha}\neq \{0\}$ are called root spaces.\\
Once a positive Weyl chamber $\mathfrak{a}^+$ in $\mathfrak{a}$ is chosen, we denote by
$\Sigma^+$ the  subset of positive roots and by 
$\rho:= \frac{1}{2}\sum_{\alpha\in\Sigma^+} (\dim \mathfrak{h}_{\alpha})\alpha$ 
half the sum of the positive roots (counted according to their multiplicity).\\

We say that a locally symmetric space $M = \Gamma\backslash X$, where
$\Gamma \cong \pi_1(M)$ is a non-uniform lattice in $G$ that acts without fixed points on $X$, 
has {\em rank one} if it admits a decomposition
\begin{equation}\label{decomposition}
 M = M_0 \cup Z_1 \cup\ldots\cup Z_k,
\end{equation}
into a compact Riemannian manifold  $M_0$ with boundary and finitely many ends
$Z_i$, $i=1,\ldots k,$ associated to rank one $\Gamma$ cuspidal parabolic
subgroups $P_i\subset G, i = 1,\ldots,k,$ in particular, each $Z_i$ is a fibered cusp
and $M = \Gamma \backslash X$ is a non-compact locally symmetric space with finite volume, cf. also \cite{MR614517}.\\
If $\rank(X) \geq 2$ it follows by Margulis' arithmeticity result that $\Gamma$ is in particular
arithmetic \cite{MR1090825,MR776417}, if $\rank(X)=1$ this needs however not be true \cite{MR932135,MR1090825}.  
We will now recall some basic facts about the geometry and $L^2$ spectral theory of
locally symmetric spaces with rank one in order to fix notation. More details can be found e.g. in
\cite{MR0232893,MR1839581,MR1025165,MR0267041}. For a more general class of
Riemannian manifolds (manifolds with cusps of rank one) which are basically defined via
the decomposition (\ref{decomposition}) above, we refer to W. M\"uller's book
\cite{MR891654}.

\subsubsection{Langlands decomposition and reduction theory}

By
$$
 P = N_PA_PM_P
$$
we denote the Langlands decomposition of  a rank one $\Gamma$ cuspidal parabolic
subgroup $P\subset G$ into a unipotent subgroup $N_P$, a one dimensional 
abelian subgroup $A_P$, and a reductive subgroup $M_P$. 
In the case where $X$ denotes a higher rank symmetric space, this decomposition can be found
in  \cite[III.1.11]{MR2189882}. In the case where $X$ has rank one, we refer to  
\cite[I.1.9]{MR2189882}. See also the remark concerning the comparison of the real 
and rational Langlands decomposition in \cite[III.1.12]{MR2189882}. \\
If we denote by 
$$
 X_P = M_P/ K\cap M_P
$$
the {\em boundary symmetric space}, we have the {\em horocyclic decomposition} of $X$:
$$
  X\cong N_P\times A_P\times X_P.
$$
More precisely, if we denote by $\tau: M_P\to X_P$ the canonical projection, we have an analytic diffeomorphism
\begin{equation}\label{rational horocyclic decomposition}
 \mu: N_P\times A_P\times X_P\to X,\,\, (n,a,\tau(m)) \mapsto nam\cdot x_0,
\end{equation}
for some $x_0 \in X$.
Note, that the boundary symmetric space $X_P$ is a Riemannian product of a symmetric
space of non-compact type by a Euclidean space. 

We denote in the following by $\mathfrak{g}, \mathfrak{a}_P$, and $\mathfrak{n}_P$ the Lie algebras of the (real) Lie groups $G, A_P$, and $N_P$ defined above.
Associated with the pair $(\mathfrak{g}, \mathfrak{a}_P)$ there is  a root system $\Phi(\mathfrak{g}, \mathfrak{a}_P)$.
If we define for $\alpha\in \Phi(\mathfrak{g}, \mathfrak{a}_P)$ the {\em root spaces}
$$ \mathfrak{g}_{\alpha} := \{ Z\in \mathfrak{g} : \ad(H)(Y) = \alpha(H)(Y) \mbox{~for all~} H\in \mathfrak{a}_P \},$$
we have the root space decomposition
$$ 
\mathfrak{g} = \mathfrak{g}_0 \bigoplus_{\alpha\in \Phi(\mathfrak{g}, \mathfrak{a}_P)} \mathfrak{g}_{\alpha}.
$$
Furthermore, the $\Gamma$ cuspidal parabolic subgroup $P$ defines an ordering of 
$\Phi(\mathfrak{g}, \mathfrak{a}_P)$ such that
$$ 
\mathfrak{n}_P = \bigoplus_{\alpha\in \Phi^+(\mathfrak{g}, \mathfrak{a}_P)} \mathfrak{g}_{\alpha}.
$$
The root spaces $\mathfrak{g}_{\alpha}, \mathfrak{g}_{\beta}$ to distinct  positive roots 
$\alpha, \beta\in \Phi^+(\mathfrak{g}, \mathfrak{a}_P)$ are orthogonal with respect to the Killing form:
$$ B(\mathfrak{g}_{\alpha}, \mathfrak{g}_{\beta}) = \{0\}.$$
We also define
$$ 
\rho_P := \sum_{\alpha\in\Phi^{+}(\mathfrak{g}, \mathfrak{a}_P)}(\dim\mathfrak{g}_{\alpha})\alpha.
$$   
If $\rank(X) = 1$, we have $||\rho|| = ||\rho_P||$. If $X$ is a higher rank symmetric space this needs
not always be true. But as we assume that $M = \Gamma\backslash X$ is a rank one
locally symmetric space the root systems $\Phi(\mathfrak{g},\mathfrak{a}_{P_i})$ are canonically isomorphic (cf.  \cite[11.9]{MR0244260}) and moreover, we can conclude 
$||\rho_{P_1}|| = \ldots = ||\rho_{P_k}||$.\\
Furthermore, we denote by $\Phi^{++}(\mathfrak{g}, \mathfrak{a}_P)$ the set of simple positive
roots. Recall, that we call a positive root $\alpha\in \Phi^{+}(\mathfrak{g}, \mathfrak{a}_P)$ simple if
$\frac{1}{2}\alpha$ is not a root.\\
Let us now define for any $t \in \mathfrak{a}_P$ the set
$$
 A_{P,t} = \{ e^{H} \in A_P : \alpha(H) > \alpha(t) \mbox{~for all~} 
 			\alpha\in \Phi^{++}(\mathfrak{g}, \mathfrak{a}_P) \}.
$$ 
Then any end $Z_i$ in the decomposition of the rank one locally symmetric space $M$ 
can be identified with a so-called  {\em Siegel set} 
$$ 
U_i\times A_{P_i,t_i}\times V_i \subset N_{P_i}\times A_{P_i}\times X_{P_i}, 
$$
more precisely, 
$ Z_i = \pi(U_i\times A_{P_i,t_i}\times V_i)$, where $\pi: X\to\Gamma\backslash X$ denotes
the canonical projection and $U_i \subset N_{P_i}, V_i\subset X_{P_i}$ are sufficiently large
bounded subsets.
\subsubsection{$\boldsymbol{L^2}$ spectral theory}\label{L2 spectral theory}

One knows that the $L^2$ spectrum $\sigma(\Delta_{M,2})$ of the Laplace-Beltrami operator 
$\Delta_{M,2}$  on a rank one (non-compact) locally symmetric space $M$ is the union of a point spectrum and an absolutely continuous spectrum.
The point spectrum consists of a (possibly infinite) sequence of eigenvalues
$$0=\lambda_0 < \lambda_1\leq \lambda_2\leq \dots$$
with finite multiplicities such that below any finite number there are only finitely many eigenvalues.
The absolutely continuous spectrum equals $[b^2,\infty)$ where  
$b= ||\rho_{P_1}|| = \ldots = ||\rho_{P_k}|| =: ||\rho_P||$.
In what follows, we denote by $L^2_{dis}(M)$ the subspace spanned by all 
eigenfunctions of $\Delta_{M,2}$
and by $L^2_{con}(M)$ the orthogonal complement of  $L^2_{dis}(M)$ in $L^2(M)$.

The absolutely continuous part of the $L^2$ spectrum is parametrized by generalized eigenfunctions of $\Gamma\backslash X$ which are given by Eisenstein series.
Therefore, we recall several basic facts about Eisenstein series. Our main reference here is
 \cite{MR0232893}.
\begin{definition}
Let $f$ be a measurable, locally integrable function on $\Gamma\backslash X$. 
The {\em constant term}
$f_P$ of $f$ along some rank one $\Gamma$ cuspidal parabolic subgroup $P$ of $G$ is defined as
$$
  f_P(x) = \int_{(\Gamma_P\cap N_P)\backslash N_P} f(nx) dn, 
$$
where the measure $dn$ is normalized such that the  volume of 
$(\Gamma_P\cap N_P)\backslash N_P$ equals one and $\Gamma_P = \Gamma\cap P$.\\
 A function  $f$ on $\Gamma\backslash X$  with the property 
 $f_P=0$ for all rank one $\Gamma$ cuspidal parabolic subgroups $P$ of $G$ is called
 {\em cuspidal} and the subspace of cuspidal functions in $L^2(\Gamma\backslash X)$ is 
 denoted by $L^2_{cus}(\Gamma\backslash X)$.
\end{definition}
It is known that 
$$
L^2_{cus}(M) \subset L^2_{dis}(M)
$$ 
and this inclusion is in general strict as the non-zero constant functions are not contained in
$L^2_{cus}(M)$ if $M$ is non-compact.

Let $P$ be rank one $\Gamma$ cuspidal parabolic subgroup of $G$ and $\Gamma_{M_P}$
the image of $\Gamma_P=\Gamma\cap P$ under the projection
$N_PA_PM_P \to M_P$. Then, $\Gamma_{M_P}$ acts discretely on the
boundary symmetric space $X_P$ and the respective quotient 
$\Gamma_{M_P}\backslash X_P$, called boundary locally symmetric space, has
finite volume. Furthermore, we denote by $\mathfrak{a}_P^*$ the dual of $\mathfrak{a}_P$
and put
$$ 
	\mathfrak{a}_P^{*+} = \{ \lambda \in \mathfrak{a}_P^* :  
		\langle \lambda, \alpha\rangle > 0  \mbox{~for all~} 
		\alpha \in \Phi^{++}(\mathfrak{g}, \mathfrak{a}_P) \}.
$$		
For any $\varphi\in L^2_{cus}(\Gamma_{M_P}\backslash X_P)$ and 
$\Lambda \in \mathfrak{a}_P^*\otimes\C$ with 
$\re(\Lambda) \in \rho_P + \mathfrak{a}_P^{*+}$
we define the {\em Eisenstein series} $E(P|\varphi,\Lambda)$ as follows:
\begin{equation}\label{eisenstein series}
 E(P|\varphi,\Lambda:x) = \sum_{\gamma\in \Gamma_P\backslash \Gamma}
 	e^{(\rho_P+\Lambda)(H_P(\gamma x))}\varphi(z_P(\gamma x)),
\end{equation}
where $\mu(n_P(x),e^{H_P(x)}, z_P(x)) = x$ (cf. (\ref{rational horocyclic decomposition})).
This series converges uniformly for $x$ in compact subsets of $X$ and is holomorphic in 
$\Lambda$. Furthermore,
$E(P|\varphi,\Lambda)$ can meromorphically be continued (as a function of $\Lambda$) to
$\mathfrak{a}_P^*\otimes\C$. 

By definition, the Eisenstein series are $\Gamma$ invariant and hence, they define functions on
$M=\Gamma\backslash X$. We have the following lemma.
\begin{lemma}\label{generalized eigenfunctions}
Let $\varphi\in L^2_{cus}(\Gamma_{M_P}\backslash X_P)$ be an eigenfunction 
of $\Delta_{\Gamma_{M_P}\backslash X_P,2}$ with 
respect to some eigenvalue $\nu$. Then we have
for any $\Lambda\in\mathfrak{a}_P^*\otimes\C$ that is not a pole of 
$E(P|\varphi,\Lambda)$ the following:
$$ 
\Delta E(P|\varphi,\Lambda) = (\nu + ||\rho_P||^2
		-\langle \Lambda,\Lambda\rangle) E(P|\varphi,\Lambda).
$$
\end{lemma}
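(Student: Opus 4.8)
The plan is to compute $\Delta E(P|\varphi,\Lambda)$ by reducing the Laplacian on $X$ to the horocyclic coordinates $N_P\times A_P\times X_P$ furnished by the diffeomorphism $\mu$ in (\ref{rational horocyclic decomposition}), and then to observe that each summand of the Eisenstein series (\ref{eisenstein series}) is built from the function $e^{(\rho_P+\Lambda)(H_P(x))}\varphi(z_P(x))$. Since $\Delta$ is $G$-invariant, it commutes with the $\Gamma$-action, so it suffices to verify the claimed eigenvalue identity on the single building block $\psi_\Lambda(x):=e^{(\rho_P+\Lambda)(H_P(x))}\varphi(z_P(x))$ and then sum over $\gamma\in\Gamma_P\backslash\Gamma$; convergence and the permissibility of term-by-term differentiation are guaranteed for $\re(\Lambda)\in\rho_P+\mathfrak{a}_P^{*+}$ by the stated uniform convergence on compacta, and the identity then extends to all non-polar $\Lambda$ by meromorphic continuation (the uniqueness theorem for analytic continuation).

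First I would recall the standard formula for the Laplace--Beltrami operator in the horocyclic decomposition. Writing a point as $x=nam\cdot x_0$ with $a=e^H$, the metric splits so that $\Delta$ decomposes into a piece acting in the $N_P$-directions, a piece acting in the $A_P$-direction, and the Laplacian $\Delta_{X_P}$ of the boundary symmetric space, together with a first-order drift term in the $A_P$-variable coming from the logarithmic derivative of the volume density (which is $e^{2\rho_P(H)}$). The key point is that $\psi_\Lambda$ is \emph{constant} along the $N_P$-orbits (it depends only on $H_P(x)\in\mathfrak{a}_P$ and $z_P(x)\in X_P$), so the $N_P$-part of $\Delta$ annihilates it and only the $A_P$-radial part and $\Delta_{X_P}$ survive. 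On the $X_P$-factor, $\varphi$ is an eigenfunction of $\Delta_{\Gamma_{M_P}\backslash X_P}$ with eigenvalue $\nu$, contributing $\nu\,\psi_\Lambda$. On the $A_P$-factor one evaluates the radial operator on the exponential $e^{(\rho_P+\Lambda)(H)}$: a direct computation of $-\partial^2/\partial t^2 - 2\rho_P\cdot\partial/\partial t$ (schematically, with the $A_P$-coordinate $t$) applied to $e^{(\rho_P+\Lambda)(H)}$ produces the scalar $||\rho_P||^2-\langle\Lambda,\Lambda\rangle$, since the linear terms in $\Lambda$ cancel against the drift and one is left with the difference of the squared norms. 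Combining the two contributions gives exactly $(\nu+||\rho_P||^2-\langle\Lambda,\Lambda\rangle)\,\psi_\Lambda$.

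The computation on the $A_P$-radial direction is the step I would single out as the one needing genuine care rather than the main obstacle; the genuine subtlety is bookkeeping. Specifically, one must be sure that the cross-terms coupling the $A_P$-direction to $X_P$ vanish, i.e. that the metric really splits as an (almost) product in these coordinates so that there is no interaction between the $\rho_P$-shift and $\Delta_{X_P}$. This is where the structure of $X_P$ as a Riemannian product of a symmetric space of non-compact type and a Euclidean factor, and the orthogonality $B(\mathfrak{g}_\alpha,\mathfrak{g}_\beta)=\{0\}$ of root spaces for distinct positive roots, are used: they ensure that the off-diagonal terms in the Laplacian drop out and that the shift $e^{\rho_P(H)}$ is precisely the half-density that symmetrizes the radial operator. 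Once the building-block identity is established, the remaining steps — interchanging $\Delta$ with the (locally finite, uniformly convergent) sum, using $\Gamma$-invariance of $\Delta$ to move it past each $\gamma$, and invoking meromorphic continuation to remove the restriction $\re(\Lambda)\in\rho_P+\mathfrak{a}_P^{*+}$ — are routine.
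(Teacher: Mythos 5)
Your proposal is correct and takes essentially the same route as the paper: reduce to the single building block $e^{(\rho_P+\Lambda)(H_P)}\varphi(z_P)$ via the horocyclic-coordinate formula $\Delta f = ||\rho_P||^2 f + e^{\rho_P}\Delta_{A_P}\bigl(e^{-\rho_P}f\bigr) + \Delta_{X_P}f$ for functions constant along $N_P$ (your radial operator with drift is just the unconjugated form of this), use $G$-invariance to handle the remaining summands of the uniformly convergent series, and finish by meromorphic continuation. The only quibble is the sign of your schematic drift term: with the volume density $e^{-2||\rho_P||y}$ used in the paper it should be $+2\rho_P\,\partial/\partial t$, which is what actually makes the linear terms in $\Lambda$ cancel as you claim.
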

\begin{proof}
In rational horocyclic coordinates we have for a function $f$ that is constant along $N_P$
the formula
\begin{eqnarray*}
 \Delta f &=& ||\rho_P||^2f  +
 		    e^{\rho_P} \Delta_{A_P}\Big( e^{-\rho_P}f\Big) + \Delta_{X_P}f,
\end{eqnarray*} 
where $\Delta_{A_P}$, and $\Delta_{X_P}$ denote the Laplacians on $A_P$ and 
$X_P$. This follows from an analogous calculation as in the proof of 
\cite[Proposition 3.8]{MR754767} or \cite[Theorem 15.4.1]{MR0231321}.
We therefore obtain
\begin{eqnarray*}
 \Delta e^{(\rho_P+\Lambda)(H_P)}\varphi(z_P) &=&
  		(\nu + ||\rho_P||^2 -\langle \Lambda,\Lambda\rangle) 
		e^{(\rho_P+\Lambda)(H_P)}\varphi(z_P).
\end{eqnarray*}
As $\Delta$ is $G$ invariant, the other terms in the Eisenstein series $E(P|\varphi,\Lambda)$
satisfy this equation too, and hence,  the Eisenstein series $E(P|\varphi,\Lambda)$ itself
satisfies this equation in the region of absolute convergence. By meromorphic continuation, the claim
follows.
\end{proof}

We conclude this section with the description of the constant term of an Eisenstein series
on  a rank one locally symmetric space.\\
The boundary locally symmetric space
$\Gamma_{M_P}\backslash X_P$ is compact for rank one $\Gamma$ cuspidal parabolic subgroups 
$P$  and thus any $L^2$ function on $\Gamma_{M_P}\backslash X_P$ is cuspidal as the
cuspidal condition is empty.

Let $\mu$ be an eigenvalue of some boundary locally symmetric space 
$\Gamma_{M_{P_i}}\backslash X_{P_i}$ and choose an orthonormal basis of the
$\mu$-eigenspace of  $\Gamma_{M_{P_j}}\backslash X_{P_j}$ for any
$j=1,\ldots, k$. The union of these bases is denoted by
$\{\varphi_1^{\mu},\ldots, \varphi_{l(\mu)}^{\mu} \}$. Each $\varphi_{m}^{\mu}$ is associated
to a unique $P_{j(m)}$, i.e. $\varphi_{m}^{\mu}$  is an eigenfunction on 
$\Gamma_{M_{P_{j(m)}}}\backslash X_{P_{j(m)}}$ and defines an Eisenstein series
$E(P_{j(m)}| \varphi_{m}^{\mu},\Lambda)$. The poles of the Eisenstein series in the 
half plane $\re(\Lambda)\geq 0$ are contained in the interval $(0,||\rho_{P}||]$. Furthermore,
if $\Lambda$ is a pole, $||\rho_{P}||^2 - \langle \Lambda,\Lambda\rangle$ is an $L^2$ eigenvalue
of the Laplacian and an eigenfunction is given by the residue of the corresponding 
Eisenstein series \cite{MR0232893,MR1025165}. 

For the constant term 
$E_{P_j}(P_{j(m)}| \varphi_{m}^{\mu},\Lambda)$ of  $E(P_{j(m)}| \varphi_{m}^{\mu},\Lambda)$
along $P_j$ we have (for more details cf. \cite{MR1839581,MR1025165})
\begin{multline}
 E_{P_j}(P_{j(m)}| \varphi_{m}^{\mu},\Lambda)(e^{H_{P_j}}z) = \\
 \delta_{j,j(m)}e^{(\rho_{P_j}+\Lambda)(H_{P_j})}\varphi_{m}^{\mu}(z)  +
  \sum_{i=1}^{l(\mu)}e^{(\rho_{P_j}-\Lambda)(H_{P_j})} 
  		\Big(c_{mi}(\Lambda)\varphi_{i}^{\mu}\Big)(z),
\end{multline}
where $c_{mi}(\Lambda)$ are the entries of the scattering matrix 
$c_{P_j|P_{j(m)}}(w:\Lambda)$.\\
The scattering matrix $c_{P_2|P_1}(w:\Lambda)$  is a bounded (linear) operator
$$
L^2_{cus}(\Gamma_{M_{P_1}}\backslash X_{P_1})\to
L^2_{cus}(\Gamma_{M_{P_2}}\backslash X_{P_2})
$$
and  $[c_{P_2|P_1}(w:\Lambda)\varphi]$ and $\varphi$ 
are eigenfunctions for the same eigenvalue.

\section{$\boldsymbol{L^p}$   spectral theory of locally symmetric spaces}

Let us denote in the following by $M=\Gamma\backslash X$ a locally symmetric space with rank one, by 
$\lambda_0,\ldots, \lambda_r$ the eigenvalues that are strictly smaller than $ ||\rho_P||^2$, and by $P_1,\ldots, P_k$ representatives of rank one $\Gamma$ cuspidal parabolic subgroups of $G$.

For any $p\in [1,\infty)$ we define the parabolic region
\begin{equation}
 P_{M,p} = \left\{ ||\rho_{P}||^2 - z^2 : 
 			z\in\C, |\re z|\leq ||\rho_{P}||\cdot |\frac{2}{p}-1|\right\}\subset \C.
\end{equation} 
Our main concern in this section is to prove (for the notation see the preceding section)
\begin{theorem}\label{main theorem 1} 
Let $M=\Gamma\backslash X$ denote  a locally symmetric space with rank one,
 $p\in (1,2)$, and $\Lambda\in\mathfrak{a}_{P_j}\otimes\C$ with 
 $|\re(\Lambda)(H^0)| < \frac{2-p}{p} ||\rho_{P}||$. 
 Then the Eisenstein series
 $$ E(P_{j(m)}| \varphi_{m}^{\mu},\Lambda)$$
 are eigenfunctions of $\DMp$ with eigenvalue 
 $\mu + ||\rho_{P}||^2 - \left(\Lambda(H^0)\right)^2$ if $\Lambda$ is not a pole.
\end{theorem}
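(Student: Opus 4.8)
The plan is to reduce the whole statement to a single $L^p$ estimate and then feed it into the machinery already assembled. Lemma~\ref{generalized eigenfunctions} shows that, away from the poles, $E:=E(P_{j(m)}|\varphi_m^\mu,\Lambda)$ is a differentiable solution of the \emph{classical} eigenvalue equation $\Delta E=(\mu+||\rho_P||^2-\langle\Lambda,\Lambda\rangle)E$; since $\mathfrak{a}_{P_j}$ is one--dimensional in the rank one situation we have $\langle\Lambda,\Lambda\rangle=(\Lambda(H^0))^2$ for the unit generator $H^0$, so the eigenvalue is exactly $\mu+||\rho_P||^2-(\Lambda(H^0))^2$. Hence the theorem follows once we know
\[
 E(P_{j(m)}|\varphi_m^\mu,\Lambda)\in L^p(M)\quad\text{whenever}\quad |\re(\Lambda)(H^0)|<\tfrac{2-p}{p}\,||\rho_P||,
\]
because Corollary~\ref{corollary Lp eigenfunctions} then promotes the classical eigenfunction to an element of $\dom(\DMp)$ with the same eigenvalue. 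One point must be addressed: Corollary~\ref{corollary Lp eigenfunctions} is phrased for real functions and real eigenvalues, whereas $E$ and its eigenvalue are complex. This is harmless, since the heat equation $\pddt u=-\Delta u$ has real coefficients. Applying Theorem~\ref{thm Strichartz} to the real and imaginary parts of $v(t,x)=e^{-(\mu+||\rho_P||^2-(\Lambda(H^0))^2)t}E(x)$ separately (each is a differentiable real solution with $||v(t,\cdot)||_{L^p}\le Ce^{Dt}$) and recombining by linearity, one finds $e^{-t\DMp}E=e^{-(\mu+||\rho_P||^2-(\Lambda(H^0))^2)t}E$, which reproduces the conclusion of the corollary verbatim.

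To establish the $L^p$ bound I would use the decomposition $M=M_0\cup Z_1\cup\dots\cup Z_k$ of~(\ref{decomposition}). On the compact manifold $M_0$ the Eisenstein series is real--analytic and hence bounded, so its restriction lies in $L^p(M_0)$ because $\vol(M_0)<\infty$. Everything therefore reduces to estimating $E$ on each cusp $Z_j\cong U_j\times A_{P_j,t_j}\times V_j$. On $Z_j$ I would split $E=E_{P_j}(P_{j(m)}|\varphi_m^\mu,\Lambda)+\big(E-E_{P_j}\big)$ into the constant term along $P_j$ and the remainder. The remainder decays faster than any exponential as one moves out the cusp, this being the standard fact that an automorphic form differs from its constant term by a rapidly decreasing function in the $A_{P_j}$--direction (cf.\ \cite{MR0232893,MR1025165}); consequently $E-E_{P_j}\in L^p(Z_j)$ with no condition on $\Lambda$.

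The heart of the argument, and the step I expect to be the main obstacle, is the $L^p$ estimate of the constant term on the cusp, since it is here that the precise hypothesis on $\re(\Lambda)(H^0)$ enters and the bookkeeping of the invariant density must be done exactly. By the constant--term formula recalled above, $E_{P_j}$ is a fixed linear combination of the two exponential profiles $e^{(\rho_{P_j}+\Lambda)(H_{P_j})}$ and $e^{(\rho_{P_j}-\Lambda)(H_{P_j})}$ (the second carrying the bounded scattering coefficients $c_{mi}(\Lambda)$) multiplied by the $L^2$--functions $\varphi_i^\mu$ on the \emph{compact} boundary space $\Gamma_{M_{P_j}}\backslash X_{P_j}$; in particular it is independent of the $N_{P_j}$--variable, over whose bounded range $U_j$ the integration is harmless. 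Writing $H_{P_j}=sH^0$ with $s>t_j$ (so that $\rho_{P_j}(H^0)=||\rho_P||$) and computing $\int_{Z_j}|E_{P_j}|^p$ in horocyclic coordinates reduces, after the bounded $N_{P_j}$-- and $X_{P_j}$--integrations, to the two scalar integrals
\[
 \int_{t_j}^\infty e^{\,p(||\rho_P||\pm\re\Lambda(H^0))s}\,e^{-2||\rho_P||s}\,ds,
\]
the factor $e^{-2||\rho_P||s}$ being the invariant density in the $A_{P_j}$--direction (a cusp has finite volume and is exponentially thin, so this density decays at the rate $e^{-2||\rho_P||s}$ as $s\to\infty$; the normalization is pinned down by the fact that the absolutely continuous $L^2$ spectrum begins at $||\rho_P||^2$, as in the model case of a real hyperbolic cusp). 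Each integral converges precisely when $p(||\rho_P||\pm\re\Lambda(H^0))<2||\rho_P||$, i.e.\ when $\pm\re(\Lambda)(H^0)<\tfrac{2-p}{p}||\rho_P||$, and the two conditions together are exactly the hypothesis $|\re(\Lambda)(H^0)|<\tfrac{2-p}{p}||\rho_P||$. Combining this with the compact part and the rapidly decreasing remainder yields $E\in L^p(M)$, and the reduction of the first paragraph then finishes the proof.
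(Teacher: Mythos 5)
Your proof is correct and follows essentially the same route as the paper: reduce to the $L^p$ membership of the Eisenstein series via Lemma~\ref{generalized eigenfunctions} and Corollary~\ref{corollary Lp eigenfunctions}, then establish that membership on each cusp by splitting off the constant term (whose $L^p$ norm is controlled by the integrals $\int_{t_j}^{\infty}e^{p(||\rho_P||\pm\re\Lambda(H^0))s}e^{-2||\rho_P||s}\,ds$ against the density $e^{-2||\rho_P||y}$) and invoking the rapid decay of $E-E_{P_j}$. Your extra remark on applying the (real-valued) Corollary~\ref{corollary Lp eigenfunctions} to the real and imaginary parts of $e^{-\lambda t}E$ separately is a legitimate point that the paper passes over in silence.
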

The proof of Theorem \ref{main theorem 1} follows from Proposition \ref{prop main} below,
Lemma \ref{generalized eigenfunctions}, 
and Corollary \ref{corollary Lp eigenfunctions}.\\
Theorem \ref{main theorem 1}  contrasts the following fact.
\begin{proposition}
 Let $M=\Gamma\backslash X$ denote  a locally symmetric space with rank one and $p\geq 2$.
 Then there is at most a discrete set of eigenvalues for $\Delta_{M,p}$. 
\end{proposition}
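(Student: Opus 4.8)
The plan is to show that for $p \geq 2$, any real eigenvalue of $\Delta_{M,p}$ must already be an $L^2$ eigenvalue, so that the discreteness of the $L^2$ point spectrum (recalled in Section~\ref{L2 spectral theory}, where the eigenvalues below $b^2 = ||\rho_P||^2$ form a finite set and those above accumulate only at infinity) transfers to the $L^p$ setting. The key device is the consistency of the family $\Delta_{M,p}$ established in Lemma~\ref{lemma consistent}, together with the finite-volume embedding $L^p(M) \hookrightarrow L^2(M)$ valid for $p \geq 2$ because $M$ has finite volume.

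First I would take a real eigenvalue $\mu$ of $\Delta_{M,p}$ with eigenfunction $f \in \dom(\Delta_{M,p}) \subset L^p(M)$, so $\Delta_{M,p} f = \mu f$. Since $\vol(M) < \infty$ and $p \geq 2$, H\"older's inequality gives $f \in L^2(M)$. The heart of the argument is then to promote $f$ to a genuine $L^2$ eigenfunction. I would do this via the heat semigroup: because $f$ is an eigenfunction of the generator $\Delta_{M,p}$, one has $e^{-t\Delta_{M,p}} f = e^{-\mu t} f$ for all $t > 0$. By the consistency of the semigroups (property~(1) in Section~\ref{heat semigroup}), $e^{-t\Delta_{M,p}}$ and $e^{-t\Delta_{M,2}}$ agree on $L^p \cap L^2 = L^p$ (for $p \geq 2$, again using finite volume), so $e^{-t\Delta_{M,2}} f = e^{-\mu t} f$ as well. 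Differentiating this semigroup orbit at $t = 0$ in the $L^2$ norm shows that $f \in \dom(\Delta_{M,2})$ with $\Delta_{M,2} f = \mu f$; concretely, $\tfrac{1}{t}(e^{-t\Delta_{M,2}} f - f) = \tfrac{1}{t}(e^{-\mu t} - 1) f \to -\mu f$ in $L^2$, which is exactly the statement that $f$ lies in the domain of the $L^2$ generator with eigenvalue $\mu$. This mirrors the argument already used in Corollary~\ref{corollary Lp eigenfunctions} and in Lemma~\ref{lemma consistent}.

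Having shown that every real eigenvalue of $\Delta_{M,p}$ is an eigenvalue of the self-adjoint operator $\Delta_{M,2}$, I would finish by invoking the structure of $\sigma(\Delta_{M,2})$ from Section~\ref{L2 spectral theory}: the point spectrum consists of a sequence $0 = \lambda_0 < \lambda_1 \leq \lambda_2 \leq \cdots$ with finite multiplicities accumulating only at $+\infty$, hence a discrete subset of $\R$. Therefore the set of real eigenvalues of $\Delta_{M,p}$, being contained in this set, is itself at most discrete.

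The main obstacle is the justification that eigenfunctions of the abstract generator $\Delta_{M,p}$ transfer across the consistency to become $L^2$ eigenfunctions, rather than merely $L^2$ functions fixed by the semigroup up to scaling. The clean way around this is precisely the semigroup computation above, which sidesteps any need to compare the domains $\dom(\Delta_{M,p})$ and $\dom(\Delta_{M,2})$ directly; the identity $e^{-t\Delta_{M,2}} f = e^{-\mu t} f$ already forces membership in $\dom(\Delta_{M,2})$. One should note the restriction to \emph{real} eigenvalues in the statement, which is what allows the scalar $e^{-\mu t}$ to behave well and what aligns the conclusion with the real point spectrum of the self-adjoint $\Delta_{M,2}$; complex eigenvalues are not claimed to be excluded here and would require the separate analyticity and sector arguments from Section~\ref{heat semigroup}.
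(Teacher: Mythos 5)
Your proof is correct and follows essentially the same route as the paper: the finite-volume embedding $L^p(M)\hookrightarrow L^2(M)$ for $p\geq 2$ together with consistency reduces everything to the discreteness of the $L^2$ point spectrum; in fact your semigroup computation ($e^{-t\Delta_{M,2}}f=e^{-\mu t}f$, then differentiate at $t=0$ in the $L^2$ norm) supplies a detail the paper's one-line proof leaves implicit, since Lemma~\ref{lemma consistent} as stated requires knowing $f\in\dom(\Delta_{M,2})$ beforehand. The only quibble is your restriction to real $\mu$: the identical computation works verbatim for complex $\mu$ and yields $\Delta_{M,2}f=\mu f$, so self-adjointness forces $\mu\in\R$, which is what one needs if the proposition is read as bounding all eigenvalues rather than only the real ones.
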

\begin{proof}
 In the case $p=2$ this is well known and was stated in the previous section. Let now $p>2$ 
 and assume that $\varphi$ is some eigenfunction for $\Delta_{M,p}$. As 
 $L^p(M)\hookrightarrow L^2(M)$ and because of Lemma \ref{lemma consistent} the function
 $\varphi$ is also an eigenfunction for $\Delta_{M,2}$. This shows the claim.
\end{proof}
In what follows, $H^0\in \mathfrak{a}_{P_j}^+$ denotes the unique  element with norm one
(note, that $\dim A_{P_j} =1$), i.e. we have $\rho_{P_j}(H^0)= ||\rho_{P}||$. 
\begin{lemma}
 Let $S_j=U_j\times A_{P_j,t_j}\times V_j$ denote a Siegel set associated with $P_j$, $p\in [1,2)$,   and  let $\Lambda\in\mathfrak{a}_{P_j}\otimes\C$ with 
 $|\re(\Lambda)(H^0)| < \frac{2-p}{p} ||\rho_{P}||$. 
 Then we have 
 $$E_{P_j}(P_{j(m)}| \varphi_{m}^{\mu},\Lambda) \in L^p(S_j)$$
 if $\Lambda$ is not a pole.
\end{lemma}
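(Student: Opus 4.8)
The plan is to reduce the claim to a one–dimensional integral along the split direction $A_{P_j}$ of the Siegel set and to read off the convergence condition from the explicit constant term formula stated above. First I would recall that $E_{P_j}(P_{j(m)}|\varphi_m^\mu,\Lambda)$ is, by that formula, a finite linear combination of terms of the shape $e^{(\rho_{P_j}+\Lambda)(H_{P_j})}\varphi_m^\mu(z)$ and $e^{(\rho_{P_j}-\Lambda)(H_{P_j})}(c_{mi}(\Lambda)\varphi_i^\mu)(z)$. Each function $\varphi_i^\mu$ is an eigenfunction of the Laplacian on the \emph{compact} boundary locally symmetric space $\Gamma_{M_{P_j}}\backslash X_{P_j}$; hence it is smooth and, being the lift of a function on a compact quotient, bounded on all of $X_{P_j}$. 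Since $U_j\subset N_{P_j}$ and $V_j\subset X_{P_j}$ are bounded, the integrations over $U_j$ and $V_j$ contribute only finite constants, and the entire problem is controlled by the behaviour of the exponential factors along $A_{P_j,t_j}$.

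Next I would introduce coordinates on the one–dimensional group $A_{P_j}$. Writing $H_{P_j}=sH^0$ with $s>s_0$ (the Siegel condition $\alpha(H)>\alpha(t_j)$ on $\Phi^{++}(\mathfrak{g},\mathfrak{a}_{P_j})$ becomes a lower bound $s>s_0$), the normalization $\rho_{P_j}(H^0)=||\rho_P||$ gives $\rho_{P_j}(H_{P_j})=s\,||\rho_P||$ and $\re(\Lambda)(H_{P_j})=s\,\re(\Lambda)(H^0)$. The volume element in horocyclic coordinates takes the form $c\,e^{-2\rho_{P_j}(H)}\,dn\,dH\,dx_P$; this is exactly the modular factor encoded in the Laplacian identity $\Delta f=||\rho_P||^2 f+e^{\rho_P}\Delta_{A_P}(e^{-\rho_P}f)+\Delta_{X_P}f$ used in the proof of Lemma \ref{generalized eigenfunctions}, and it is consistent with $M$ having finite volume. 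Putting these together, on $S_j$ I obtain the pointwise bound
\begin{equation*}
 |E_{P_j}(P_{j(m)}|\varphi_m^\mu,\Lambda)|\leq C\,e^{(||\rho_P||+|\re(\Lambda)(H^0)|)s},
\end{equation*}
where the absolute value $|\re(\Lambda)(H^0)|$ accounts simultaneously for the leading term (with $+\Lambda$) and for the scattering term (with $-\Lambda$).

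Finally I would integrate this bound. The $N_{P_j}$- and $X_{P_j}$-integrals produce finite factors, and the remaining integral along $A_{P_j,t_j}$ is
\begin{equation*}
 \int_{s_0}^{\infty} e^{p(||\rho_P||+|\re(\Lambda)(H^0)|)s}\,e^{-2||\rho_P||s}\,ds,
\end{equation*}
which converges precisely when $p(||\rho_P||+|\re(\Lambda)(H^0)|)-2||\rho_P||<0$, that is, when $|\re(\Lambda)(H^0)|<\frac{2-p}{p}||\rho_P||$. This is exactly the hypothesis, so $E_{P_j}(P_{j(m)}|\varphi_m^\mu,\Lambda)\in L^p(S_j)$ as claimed.

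The step I expect to be the main obstacle is the bookkeeping of the two exponential rates coming from the $+\Lambda$ and $-\Lambda$ summands: one must verify that \emph{both} the leading term and every scattering term are dominated by the single rate $||\rho_P||+|\re(\Lambda)(H^0)|$, so that the threshold $\frac{2-p}{p}||\rho_P||$ (rather than a one–sided condition) is what governs integrability. A secondary point requiring care is pinning down the modular factor $e^{-2\rho_{P_j}(H)}$ with the correct sign and normalization; I would anchor this on the Laplacian identity recalled above rather than re-deriving reduction theory, since that identity already carries the information needed for the $A_{P_j}$-integral.
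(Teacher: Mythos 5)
Your proof is correct and takes essentially the same route as the paper's: the paper likewise writes the volume form in horocyclic coordinates as $h(z)e^{-2||\rho_P||y}\,dz\,dy$, uses the boundedness of the data over $U_j$ and $V_j$, and reduces everything to the convergence of $\int_{t_j}^{\infty}|e^{(||\rho_P||\pm\Lambda(H^0))y}|^p e^{-2||\rho_P||y}\,dy$ under the stated bound on $|\re(\Lambda)(H^0)|$. Your extra bookkeeping of the $\pm\Lambda$ terms and of the boundedness of the $\varphi_i^{\mu}$ just makes explicit what the paper leaves implicit (the paper cites references for the volume form rather than inferring it from the Laplacian identity, but the computation is identical).
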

\begin{proof} 
  The volume form of the symmetric space $X= N_{P_j}\times A_{P_j}\times X_{P_j}$
  with respect to rational horocyclic coordinates is given by 
  $dvol_X = h(z)e^{-2||\rho_{P}||y}dzdy$ where $h>0$ is smooth on 
  $N_{P_j}\times X_{P_j}$ and $\log(a)= yH^0$ for any $a\in A_{P_j}$, cf.
  \cite{MR0338456,MR0387496}.\\
  The integrals
  $$ \int_{t_j}^{\infty} \left|e^{(||\rho_{P}|| \pm \Lambda(H^0))y}\right|^pe^{-2||\rho_{P}||y}dy$$
  are readily seen to be finite if $|\re(\Lambda)(H^0)| < \frac{2-p}{p} ||\rho_{P}||$, and the claim
  follows.
\end{proof}
\begin{lemma}
 The functions 
 $$  E(P_{j(m)}| \varphi_{m}^{\mu},\Lambda)- E_{P_j}(P_{j(m)}| \varphi_{m}^{\mu},\Lambda)$$
 are rapidly decreasing in the Siegel set $S_j$ if $\Lambda$ is not a pole.
\end{lemma}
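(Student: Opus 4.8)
The plan is to exploit that $g:=E(P_{j(m)}|\varphi_m^\mu,\Lambda)-E_{P_j}(P_{j(m)}|\varphi_m^\mu,\Lambda)$ is an eigenfunction of $\Delta$ of moderate growth whose constant term along $P_j$ vanishes, and then to read off the rapid decay from a Fourier expansion along $N_{P_j}$. First I would check that forming the constant term commutes with $\Delta$: since $\Delta$ is $G$-invariant it commutes with left translation by $N_{P_j}$, hence with the averaging $\int_{(\Gamma_{P_j}\cap N_{P_j})\backslash N_{P_j}}(\,\cdot\,)(nx)\,dn$ defining the constant term. By Lemma~\ref{generalized eigenfunctions} the series $E$ is an eigenfunction of $\Delta$ for the eigenvalue $\mu+||\rho_{P}||^2-\langle\Lambda,\Lambda\rangle$; consequently $E_{P_j}$, and therefore $g$, are eigenfunctions for the same eigenvalue. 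Since $E_{P_j}$ is already constant along $N_{P_j}$ (the explicit scattering formula depends only on $H_{P_j}$ and $z$), forming the constant term is idempotent and $g_{P_j}=E_{P_j}-E_{P_j}=0$. Finally $g$ inherits moderate growth on the Siegel set $S_j$ from the meromorphically continued $E$ and from the explicit expression for $E_{P_j}$.

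Next, and this is the heart of the matter, I would expand $g$ into a Fourier series along the compact nilmanifold $(\Gamma_{P_j}\cap N_{P_j})\backslash N_{P_j}$. Its zeroth Fourier coefficient is exactly $g_{P_j}$, which is zero by the first step, so only the non-constant modes remain. Substituting a non-trivial character of $N_{P_j}$ times a function of the $A_{P_j}$-parameter $y$ (where $\log a=yH^0$) into the eigenvalue equation, and using the horocyclic expression for $\Delta$ from the proof of Lemma~\ref{generalized eigenfunctions}, one finds that each non-constant mode satisfies a second order ordinary differential equation in $y$ of confluent-hypergeometric (Whittaker) type; in the real hyperbolic case this is literally a modified Bessel equation. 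The only solution compatible with moderate growth is the recessive $K$-Bessel/Whittaker solution, which decays like $e^{-c\,e^{y}}$ as $y\to\infty$, i.e.\ faster than any $e^{-Ny}$. As the $N_{P_j}$- and $X_{P_j}$-variables range over the bounded sets $U_j$ and $V_j$ inside $S_j$, summing these super-exponentially decaying modes yields the asserted rapid decrease of $g$, uniformly in those variables.

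The step I expect to be the main obstacle is this Fourier/ODE analysis when $N_{P_j}$ is non-abelian, as in the complex, quaternionic and octonionic hyperbolic cases: there one must replace ordinary characters by the higher-dimensional representations of Kirillov theory, verify that the corresponding radial equations again admit only rapidly decaying bounded solutions, and control the summation over infinitely many modes uniformly. Rather than carry out these estimates by hand, I would invoke the classical theory of the constant term of automorphic forms of moderate growth (Harish-Chandra, cf.\ \cite{MR0232893}, and \cite{MR1025165,MR1839581}), which packages precisely this analysis and yields directly that an eigenfunction with vanishing constant term along $P_j$ is rapidly decreasing in the Siegel set attached to $P_j$.
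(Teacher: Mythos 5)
Your proposal is correct and takes essentially the same route as the paper: the paper's own proof consists of the definition of rapid decrease followed by a bare citation to Harish--Chandra \cite[p.~13]{MR0232893} (and to \cite[7.6]{MR1482800} for $SL(2,\R)$), and what you write out --- vanishing of the constant term of the difference, Fourier expansion along $(\Gamma_{P_j}\cap N_{P_j})\backslash N_{P_j}$, and the Whittaker/Bessel ODE analysis forcing the recessive, super-exponentially decaying solution --- is precisely the content of that classical result, to which you in any case defer for the non-abelian $N_{P_j}$ and the uniform summation over modes. So your argument is a faithful unpacking of the citation rather than a genuinely different proof.
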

\begin{proof}
Recall, that we call a $\Gamma$ invariant function $f$ on $X$ rapidly decreasing on a Siegel set
$S$ associated to a rank one $\Gamma$ cuspidal parabolic subgroup $P$ if for all $\Lambda\in\mathfrak{a}^*$ we have 
$\sup_{x\in S} |f(x)|e^{\Lambda(H_{P}(x))} <\infty$, see e.g. \cite[I.2.12]{MR1361168}.
The proof now follows from \cite[p.13]{MR0232893}. In the case $G=SL(2,\R)$ it can also be found
in \cite[7.6]{MR1482800}.
\end{proof}
From the preceding lemmas it follows immediately 
\begin{proposition}\label{prop main}
Let $M=\Gamma\backslash X$ denote  a locally symmetric space with rank one,
 $p\in [1,2)$, and $\Lambda\in\mathfrak{a}_{P_j}\otimes\C$ with 
 $|\re(\Lambda)(H^0)| < \frac{2-p}{p} ||\rho_{P}||$. 
 Then we have
 $$ E(P_{j(m)}| \varphi_{m}^{\mu},\Lambda) \in L^p(M)$$
 if $\Lambda$ is not a pole.
\end{proposition}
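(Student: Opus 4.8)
The plan is to exploit the geometric decomposition (\ref{decomposition}), namely $M = M_0 \cup Z_1 \cup \cdots \cup Z_k$, and to estimate the $L^p$ norm of $E(P_{j(m)}| \varphi_{m}^{\mu},\Lambda)$ separately on the compact core $M_0$ and on each of the finitely many ends $Z_i$. Since the two preceding lemmas already control the behaviour of the Eisenstein series on a single Siegel set, the remaining task is only to patch these local estimates together; accordingly I expect the argument to be short, which is exactly why the authors announce that the proposition follows \emph{immediately}.

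On the compact piece $M_0$ there is essentially nothing to prove. For $\Lambda$ away from the poles the meromorphically continued Eisenstein series is a smooth, $\Gamma$ invariant function on $X$, hence descends to a smooth function on $M$. Restricted to the compact manifold with boundary $M_0$ it is bounded, and $M_0$ has finite volume, so $E(P_{j(m)}| \varphi_{m}^{\mu},\Lambda) \in L^p(M_0)$.

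For each end I would identify $Z_i$ with its Siegel set $S_i = U_i\times A_{P_i,t_i}\times V_i$ and split the Eisenstein series as
$$
 E = E_{P_i}(P_{j(m)}| \varphi_{m}^{\mu},\Lambda) + \bigl( E - E_{P_i}(P_{j(m)}| \varphi_{m}^{\mu},\Lambda) \bigr).
$$
The first of the two preceding lemmas, applied with $i$ in place of $j$, gives that the constant term $E_{P_i}(P_{j(m)}| \varphi_{m}^{\mu},\Lambda)$ lies in $L^p(S_i)$; here one uses that $||\rho_{P_1}|| = \cdots = ||\rho_{P_k}|| = ||\rho_{P}||$, so that the convergence condition $|\re(\Lambda)(H^0)| < \frac{2-p}{p}||\rho_{P}||$ is the same on every end. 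The second preceding lemma says the remainder $E - E_{P_i}$ is rapidly decreasing on $S_i$. Since the volume form in horocyclic coordinates carries the exponentially decaying factor $e^{-2||\rho_{P}||y}$, any rapidly decreasing (indeed any bounded) function is automatically $p$-integrable on $S_i$. Hence $E \in L^p(Z_i)$ for each $i$, and summing the finitely many contributions together with the one from $M_0$ yields $E \in L^p(M)$.

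The genuine content has thus already been absorbed into the two lemmas: the explicit exponential integrals that make the constant term $p$-integrable, and the reduction-theory fact that an automorphic form differs from its constant term by a rapidly decreasing function. The only point that needs care in assembling them is the uniformity across the $k$ ends. Concretely, I must check that the constant term of $E(P_{j(m)}| \varphi_{m}^{\mu},\Lambda)$ along each $P_i$, which by the scattering-matrix formula is again a combination of the exponentials $e^{(\rho_{P_i}\pm\Lambda)(H_{P_i})}$ with the constant coefficients $c_{mi}(\Lambda)$, satisfies the integrability condition under the single hypothesis on $\re(\Lambda)(H^0)$. Given the equality of the norms $||\rho_{P_i}||$ this is immediate, so no further obstacle arises.
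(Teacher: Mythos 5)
Your proposal is correct and is exactly the argument the paper intends: the authors state that the proposition ``follows immediately'' from the two preceding lemmas, and your write-up simply makes explicit the decomposition into the compact core and the finitely many ends, the application of the constant-term lemma on each Siegel set, and the observation that the rapidly decreasing remainder is integrable against the exponentially decaying volume form. The remark on uniformity across the ends via $\|\rho_{P_1}\|=\cdots=\|\rho_{P_k}\|$ is a sensible point of care but introduces nothing beyond what the paper's lemmas already cover.
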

From  Theorem \ref{main theorem 1} we immediately obtain
\begin{corollary}\label{corollary spectrum}
There is a discrete set $B\subset P_{M,p}\cap \{z\in \C : \im(z) >0\}$ such that
each point in the interior of $P_{M,p}\setminus B$ is an eigenvalue of $\DMp$ if $p\in (1,2)$
 and $P_{M,p}\subset \sigma(\DMp)$ for all $p\in (1,\infty)$.
\end{corollary}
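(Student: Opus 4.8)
The plan is to read off the eigenvalues furnished by Theorem \ref{main theorem 1} and to check that, as the spectral parameter runs over the admissible region, they sweep out precisely the interior of $P_{M,p}$ away from a discrete exceptional set. Since each boundary locally symmetric space $\Gamma_{M_{P_j}}\backslash X_{P_j}$ is compact, the constant function is a (cuspidal) eigenfunction with eigenvalue $\mu=0$, and I would feed this choice of $\varphi$ into Theorem \ref{main theorem 1}. Because $\dim A_{P_j}=1$, every $\Lambda\in\mathfrak{a}_{P_j}\otimes\C$ is pinned down by the single number $\lambda:=\Lambda(H^0)\in\C$, and the admissibility condition $|\re(\Lambda)(H^0)|<\frac{2-p}{p}||\rho_{P}||$ reads $|\re\lambda|<c$ with $c:=||\rho_{P}||\,|\frac{2}{p}-1|$. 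Theorem \ref{main theorem 1} then asserts that whenever $\lambda$ lies in the open strip $\{|\re\lambda|<c\}$ and is not a pole, the number $\Phi(\lambda):=||\rho_{P}||^2-\lambda^2$ is an eigenvalue of $\DMp$.

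The first, routine step is to verify that $\Phi$ maps the open strip onto the interior of $P_{M,p}$. This is essentially the definition of $P_{M,p}$, which is the image $\Phi(\{|\re\lambda|\le c\})$: since $\Phi$ is holomorphic and nonconstant it is an open map, the two lines $\re\lambda=\pm c$ trace out the bounding parabola, and hence the open strip covers exactly the interior. The delicate point, which I expect to be the main obstacle, is the treatment of the poles of the Eisenstein series. By meromorphic continuation these form a discrete subset $D\subset\mathfrak{a}_{P_j}\otimes\C$; the real ones fall in $(0,||\rho_{P}||]$ and yield, through residues, the residual $L^2$ eigenvalues, which remain eigenvalues of $\DMp$ since $L^2(M)\hookrightarrow L^p(M)$ for the finite-volume manifold $M$ when $p<2$ and Corollary \ref{corollary Lp eigenfunctions} applies. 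The remaining poles are non-real and, by the reality of $E$, occur in complex-conjugate pairs, so $\Phi(D)$ is a discrete set stable under conjugation; collecting its exceptional values into a discrete set $B$, which by this symmetry may be taken inside $P_{M,p}\cap\{\im z>0\}$ as in the statement, one checks that for each $w$ in the interior of $P_{M,p}\setminus B$ at least one of the two preimages $\pm\lambda\in\Phi^{-1}(w)$ avoids $D$. Theorem \ref{main theorem 1} then exhibits $w$ as an eigenvalue of $\DMp$, which proves the first assertion for $p\in(1,2)$.

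Finally I would upgrade this to the inclusion $P_{M,p}\subset\sigma(\DMp)$ for all $p\in(1,\infty)$. For $p\in(1,2)$ the interior of $P_{M,p}$ consists of eigenvalues by the above and so lies in $\sigma(\DMp)$; since the spectrum is closed and the interior is dense in $P_{M,p}$, the whole region is contained in $\sigma(\DMp)$. The case $p=2$ is the $L^2$ statement recalled in Section \ref{L2 spectral theory}, where $P_{M,2}=[||\rho_{P}||^2,\infty)$ is the continuous spectrum. For $p\in(2,\infty)$ I would pass to the dual exponent $p'$ with $\frac1p+\frac1{p'}=1$: one has $|\frac{2}{p'}-1|=|\frac{2}{p}-1|$, so $P_{M,p}=P_{M,p'}$, and the duality identity $\sigma(\DMp)=\sigma(\Delta_{M,p'})$ recalled earlier gives $P_{M,p}=P_{M,p'}\subset\sigma(\Delta_{M,p'})=\sigma(\DMp)$, completing the proof.
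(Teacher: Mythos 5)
Your proposal follows the paper's own route: choose $\mu=0$ using compactness of the boundary locally symmetric spaces, feed the non-pole parameters of the strip $|\re\Lambda(H^0)|<\frac{2-p}{p}\|\rho_P\|$ into Theorem \ref{main theorem 1}, absorb the poles into a discrete exceptional set while noting that poles with positive real part lie in $(0,\|\rho_P\|]$ and already yield $L^2$ (hence $L^p$) eigenvalues via residues, and obtain $P_{M,p}\subset\sigma(\DMp)$ for $p\geq 2$ by duality together with closedness of the spectrum. The one soft spot --- your appeal to conjugation symmetry to place $B$ in the open upper half-plane, where the actual reason is that every $w$ with $\im w\leq 0$ in the interior has a preimage of nonnegative real part that is either non-real (hence not a pole) or real (hence harmless by the residue argument) --- is no less terse than the paper's own one-line treatment of the same point.
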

\begin{proof} Let $\tilde{B}$ denote the (discrete) set of points 
$\Lambda\in \mathfrak{a}_{P_j}\otimes\C$ with 
$- \frac{2-p}{p} ||\rho_{P}||< \re(\Lambda)<0$ such that each $\Lambda\in\tilde{B}$
is a pole for {\em all} Eisenstein series. We define 
$$
 B = \left\{ z\in \C : \exists \Lambda\in\tilde{B} \mbox{~such that~} z = 
 		||\rho_{P}||^2 - \langle \Lambda,\Lambda\rangle \right\}.
$$ 
Then the first statement follows clearly when we choose $\mu =0$. Note, that this is possible 
as the boundary locally symmetric space is compact. Note also that the poles with positive real part
correspond to $L^2$ eigenvalues (see Section \ref{L2 spectral theory}) and hence to $L^p$
eigenvalues if $1< p \leq 2$.
The second statement follows by duality and from the fact that the spectrum is a closed subset of $\C$.
\end{proof}
As  by H\"older's inequality $L^2(M)\hookrightarrow L^p(M)$ for any $p\in (1,2]$, it follows that
each $L^2$ eigenvalue $\lambda_j$ is also an $L^p$ eigenvalue 
(cf. Corollary \ref{corollary Lp eigenfunctions}). Hence, by duality, 
$\{\lambda_j : j\in\N\} \subset \sigma(\Delta_{M,p})$ for all $p\in (1,\infty)$.
We therefore have
$\{\lambda_0,\ldots, \lambda_r\}\cup P_{M,p}\subset \sigma(\DMp)$ 
for all $p\in (1,\infty)$.\\
From Taylor's results  in  \cite{MR1016445} it follows also an ``upper bound'' for the $L^p$ spectrum, i.e. 
$$ 
 \sigma(\DMp) \subset \{\lambda_0,\ldots, \lambda_r\}\cup P_{M,p}',
$$
where 
$$
 P_{M,p}' = \left\{ ||\rho_{P}||^2 - z^2 : 
 			z\in\C, |\re z|\leq ||\rho||\cdot |\frac{2}{p}-1|\right\}\
$$
and hence, $P_{M,p}\subset P_{M,p}'$. Note, that we have
equality here if and only if $||\rho||=||\rho_{P}||$ and this condition
is obviously fulfilled if $X$ is a rank one symmetric space as in this case
$\dim \mathfrak{a}= \dim \mathfrak{a}_{P}$.
But the condition $||\rho||=||\rho_{P}||$ holds also for an important class of
($\Q$-)rank one locally symmetric spaces $M=\Gamma\backslash X$ -- the so-called
Hilbert modular varieties. For these spaces $X$ can be a higher rank symmetric space. 

To introduce this class, let $k$ be a totally real number field,
and ${\cal O}_k$ be the ring of integers of $k$.
Then $SL(2, {\cal O}_k)$ is called a Hilbert modular
group. It is an arithmetic subgroup
of $G=SL(2, \R)\times \cdots \times SL(2, \R) = SL(2,\R)^r$, where
there is one factor for each  embedding of $k$ into $\R$.
The group $SL(2, {\cal O}_k)$ acts properly on
the product $(\Hy^2)^r = \Hy^2\times \cdots \times \Hy^2$ with a finite volume quotient,
which is called a {\em Hilbert modular variety}.
More generally, for any finite index subgroup $\Gamma$ of $SL(2, {\cal O}_k)$,
the quotient of $\Gamma\backslash \Hy^2\times \cdots \times \Hy^2$
is often also called a Hilbert modular variety.

The $\Q$-rank of $M=\Gamma\backslash X$ is equal to $1$ whereas the rank of its associated
symmetric space, i.e., the universal covering of the Hilbert modular
variety, is equal to $r$. Unless the number field $k=\Q$, the
symmetric space has rank strictly greater than $1$.
Let $P_{\infty}$ be the parabolic subgroup of upper triangular
matrices of $SL(2, \R)$.
Then there exists a  minimal {\em rational} parabolic subgroup $\bP$ whose
real locus  $P$ is equal to the product  
$P_{\infty} \times \cdots  \times P_{\infty} = P_{\infty}^r$.
The $\R$-split component $A_P$ in the real Langlands decomposition
of $P$ is equal to the product $A_{\infty} \times \cdots\times A_{\infty}$, where
$$
 A_{\infty}= \left\{ \begin{pmatrix} a &0\\ 0& a^{-1} \end{pmatrix} : a>0 \right\},
$$
and the $\Q$-split component $A_{\bP}$ of the {\em rational} Langlands
decomposition of $\bP$  is  the one dimensional diagonal subgroup
$$
A_{\bP}=
 \left\{(g, \cdots, g) : g\in A_{\infty}\right\} \subset A_{\infty} \times \cdots\times A_{\infty}.
$$
Under the identification of the dual space $\mathfrak a_P$ with itself,
half the sum of positive roots $\rho$ is given by
$\rho=(\frac{1}{2}, \cdots, \frac{1}{2})$.
The orthogonal projection of $\rho$ onto the subspace $\mathfrak{a}_{\bP}$
is equal to  half the sum $\rho_{P}$ of  the {\em rational} roots.
This implies that
$||\rho||=||\rho_{P}||$.
More information about Hilbert modular varieties can be found in  \cite{MR1050763}.\\
From these remarks we obtain
\begin{corollary} 
Let $M$ denote a locally symmetric space with $\Q$-rank one whose universal
covering is a symmetric space of rank one or let $M$ denote a Hilbert modular variety.
Then we have
$$
 \sigma(\DMp) = \{\lambda_0,\ldots, \lambda_r\}\cup P_{M,p}
$$ 
for $p\in (1,\infty)$.
\end{corollary}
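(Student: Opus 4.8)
The plan is to obtain the equality by sandwiching $\sigma(\DMp)$ between the lower and upper bounds already assembled in this section, and then to show that for the two classes of spaces named in the statement the two parabolic regions $P_{M,p}$ and $P_{M,p}'$ in fact coincide. Concretely, Corollary \ref{corollary spectrum} together with the remark on the $L^2$ eigenvalues following it gives the lower bound
$$
\{\lambda_0,\ldots,\lambda_r\}\cup P_{M,p}\subset\sigma(\DMp)\qquad(p\in(1,\infty)),
$$
while the bound derived from Taylor's results (Theorem \ref{taylor}) in the discussion just above gives the upper bound
$$
\sigma(\DMp)\subset\{\lambda_0,\ldots,\lambda_r\}\cup P_{M,p}'.
$$
Since one always has $P_{M,p}\subset P_{M,p}'$, with equality precisely when $||\rho||=||\rho_P||$, the whole corollary is reduced to verifying this single norm identity in each of the two cases.

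The second step is then purely root-theoretic. If the universal covering $X$ has rank one, then $\dim\mathfrak{a}=1$; as $M$ itself is of rank one we also have $\dim\mathfrak{a}_P=1$, and since $\mathfrak{a}_P\subseteq\mathfrak{a}$ this forces $\mathfrak{a}_P=\mathfrak{a}$, hence $\rho_P=\rho$ and $||\rho||=||\rho_P||$. For a Hilbert modular variety $M=\Gamma\backslash(\Hy^2)^r$ I would instead invoke the explicit description recalled above: under $P=P_\infty^r$ and the identification of $\mathfrak{a}_P$ with $\R^r$, half the sum of positive roots is the diagonal vector $\rho=(\tfrac12,\ldots,\tfrac12)$, whereas $\mathfrak{a}_{\bP}$ is exactly the diagonal line. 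Thus $\rho$ already lies in $\mathfrak{a}_{\bP}$, its orthogonal projection $\rho_P$ equals $\rho$, and once more $||\rho||=||\rho_P||$. In either case $P_{M,p}=P_{M,p}'$.

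Finally I would substitute $P_{M,p}'=P_{M,p}$ into the two inclusions to read off
$$
\{\lambda_0,\ldots,\lambda_r\}\cup P_{M,p}\subset\sigma(\DMp)\subset\{\lambda_0,\ldots,\lambda_r\}\cup P_{M,p},
$$
which is the claimed equality. The genuine analytic content---the $L^p$ membership of the Eisenstein series in Proposition \ref{prop main} on the lower side, and Taylor's spectral computation on the upper side---has already been expended, so this corollary is essentially a bookkeeping step. The only place where a real, if modest, check is required is the Hilbert modular case of $||\rho||=||\rho_P||$: there $X$ is genuinely of higher rank, so the equality is not forced by a dimension count but must be read off from the position of $\rho$ relative to the rational split component $A_{\bP}$. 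That is the step I would expect to warrant the most care.
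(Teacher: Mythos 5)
Your proposal is correct and follows essentially the same route as the paper: the paper derives this corollary directly from the preceding remarks, namely the sandwich $\{\lambda_0,\ldots,\lambda_r\}\cup P_{M,p}\subset\sigma(\DMp)\subset\{\lambda_0,\ldots,\lambda_r\}\cup P_{M,p}'$ combined with the observation that $\|\rho\|=\|\rho_P\|$ (by the dimension count in the rank one case and by the explicit position of $\rho$ on the diagonal in the Hilbert modular case) forces $P_{M,p}=P_{M,p}'$. Your identification of the Hilbert modular verification as the only step requiring genuine care matches the emphasis of the paper's discussion.
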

%


\section{Heat dynamics}

\subsection{Chaotic semigroups}

There are many different definitions of chaos. We will use the following one which is
basically an adaption of Devaney's definition \cite{MR1046376} to the setting of strongly
continuous semigroups, cf. \cite{MR1468101}.
\begin{definition}
 A strongly continuous semigroup $T(t)$ on a Banach space ${\cal B}$ is called {\em chaotic}
 if the following two conditions hold:
   \begin{itemize}
    \item[\textup{(i)}] There exists an $f\in {\cal B}$ such that its orbit 
    			      $\{T(t)f : t\geq 0 \}$ is dense in ${\cal B}$.	
    \item[\textup{(ii)}] The set of periodic points
    			        $\{ f\in {\cal B} : \exists t>0 \mbox{~such that~} T(t)f=f \}$	 is dense in ${\cal B}$.		   
   \end{itemize}
\end{definition}
\begin{remark}\label{remark1}
 \begin{itemize}
  \item[\textup{(1)}] As with $\{T(t)f : t\geq 0 \}$  also the set $\{T(q)f : q\in\Q_{\geq 0} \}$
  				is dense, ${\cal B}$ is necessarily separable.
  \item[\textup{(2)}] The orbit of any point $T(t)f$ in a dense orbit $\{T(t)f : t\geq 0 \}$ is 
  				again dense in ${\cal B}$. Hence, the set of 
				points with a dense orbit is a dense subset of ${\cal B}$ or empty.
  \item[\textup{(3)}] For a separable Banach space ${\cal B}$ condition (i) in the definition 
  		above is equivalent to {\em topological transitivity}
  				of the semigroup $T(t)$, which means that for any pair of non-empty open subsets
				${\cal U,V} \subset {\cal B}$ there is a $t>0$ with
				$T(t){\cal U}\cap {\cal V}\neq \emptyset$, cf. \cite{MR1468101}.
   \item[\textup{(4)}] If both subsets 
   	$${\cal B}_0 = \left\{f\in {\cal B} : T(t)f\to 0\,\,(t\to \infty) \right\}$$ 
	and
	$${\cal B}_{\infty} = 
	\left\{f\in {\cal B} :  \forall \varepsilon >0\, \exists g\in{\cal B}, t > 0 \mbox{~such that~} 
				||g|| <\varepsilon, ||T(t)g -f||<\varepsilon \right\}$$
			 are dense in ${\cal B}$, the semigroup $T(t)$ has dense orbits.
			 However, this condition is not necessary, cf. \cite{MR1468101}.
   \item[\textup{(5)}] Chaotic semigroups exist only on infinite dimensional Banach spaces. 
   			       When looking at the Jordan canonical form of a bounded operator on
			       a (real or complex) finite dimensional Banach space a proof of this
			       is straightforward, see e.g. \cite[Proposition 11]{MR1685272}.
  \end{itemize}
\end{remark}
A sufficient condition for a strongly continuous semigroup to be chaotic in terms of spectral properties
of its generator was given by Desch, Schappacher, and Webb: 
\begin{theorem}[\cite{MR1468101}] \label{thm dsw}
Let $T(t)$ denote a strongly continuous semigroup on a separable 
 Banach space ${\cal B}$ with generator $A$ and let  $\Omega$ denote an open, connected
 subset of $\C$ with  $\Omega\subset \sigma_{pt}(A)$  (the point spectrum of $A$). 
 Assume that there is  a function $F: \Omega\to {\cal B}$ such that
 \begin{itemize}
  \item[\textup{(i)}] $\Omega \cap i\R \neq \emptyset$.
  \item[\textup{(ii)}] $F(\lambda) \in \ker(A-\lambda)$ for all $\lambda \in \Omega$.
  \item[\textup{(iii)}] For all $\phi \in {\cal B'}$ in the dual space of ${\cal B}$, the mapping
  				$F_{\phi}:\Omega\to \C,\, \lambda\mapsto \phi\circ F $
				is analytic. 
				Furthermore, if for some $\phi \in {\cal B'}$ we have $F_{\phi}=0$
				then already $\phi = 0$ holds.
 \end{itemize}
 Then the semigroup $T(t)$ is chaotic.
\end{theorem}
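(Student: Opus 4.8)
The plan is to exploit that whenever $F(\lambda)\in\ker(A-\lambda)$, strong continuity forces $T(t)F(\lambda)=e^{\lambda t}F(\lambda)$, so the position of $\lambda$ relative to the imaginary axis governs the dynamical behaviour of the eigenvector: those with $\re\lambda<0$ decay, those with $\re\lambda>0$ can be reached from vectors of arbitrarily small norm, and those with $\lambda\in i\R$ are periodic. The whole argument rests on a single density lemma, which I would isolate first: for every subset $\Omega'\subset\Omega$ that has an accumulation point in $\Omega$, the linear span of $\{F(\lambda):\lambda\in\Omega'\}$ is dense in ${\cal B}$. Indeed, if some $\phi\in{\cal B}'$ annihilates this span, then $F_\phi(\lambda)=\phi(F(\lambda))=0$ on $\Omega'$; since $F_\phi$ is analytic on the connected open set $\Omega$ by hypothesis (iii), the identity theorem gives $F_\phi\equiv 0$ on all of $\Omega$, whence $\phi=0$ by the separating clause in (iii), and the Hahn--Banach theorem yields density.

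Second I would establish condition (ii), density of the periodic points, which I expect to be the main obstacle. By hypothesis (i) there is a point $i\beta_0\in\Omega$, and since $\Omega$ is open it contains an interval of the imaginary axis about $i\beta_0$. The naive candidates $F(i\beta)$ are themselves periodic, as $T(2\pi/\beta)F(i\beta)=F(i\beta)$, but a finite combination $\sum_k c_k F(i\beta_k)$ need not be periodic unless the frequencies $\beta_k$ are commensurable. The trick is therefore to restrict to the set $R=\{i\beta\in\Omega:\beta\in 2\pi\Q\}$: writing $\beta_k/(2\pi)=p_k/q_k$, any finite combination $\sum_k c_k F(i\beta_k)$ with $i\beta_k\in R$ is periodic with common period the least common multiple of the $q_k$, since then $e^{i\beta_k t}=1$ simultaneously. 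Because $2\pi\Q$ is dense in $\R$, the set $R$ accumulates at points of the imaginary interval lying inside $\Omega$, so the density lemma applies to $\Omega'=R$ and shows that the periodic vectors $\spa\{F(\lambda):\lambda\in R\}$ are dense.

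For condition (i) I would show that the two sets ${\cal B}_0$ and ${\cal B}_\infty$ from part (4) of Remark \ref{remark1} are dense and then invoke that criterion. Since $i\beta_0$ is interior to $\Omega$, the sets $\Omega_-=\{\lambda\in\Omega:\re\lambda<0\}$ and $\Omega_+=\{\lambda\in\Omega:\re\lambda>0\}$ are nonempty and open, hence accumulate in $\Omega$. For $\re\lambda<0$ we have $T(t)F(\lambda)=e^{\lambda t}F(\lambda)\to 0$, so $\spa\{F(\lambda):\lambda\in\Omega_-\}\subset{\cal B}_0$, and the density lemma makes ${\cal B}_0$ dense. For $f=\sum_k c_k F(\lambda_k)$ with $\re\lambda_k>0$, the vector $g_t=\sum_k c_k e^{-\lambda_k t}F(\lambda_k)$ satisfies $T(t)g_t=f$ exactly while $\|g_t\|\to 0$ as $t\to\infty$, so $f\in{\cal B}_\infty$ and the density lemma makes ${\cal B}_\infty$ dense as well.

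Finally, because ${\cal B}$ is separable by part (1) of Remark \ref{remark1} and, by part (3), density of orbits is equivalent to topological transitivity, I would conclude directly: given nonempty open $U,V$, choose $u\in U\cap{\cal B}_0$ and a combination $f\in V$ of eigenvectors with $\re\lambda>0$ as above; then for large $t$ the vector $w=u+g_t$ lies in $U$ while $T(t)w=T(t)u+f\to f\in V$, so $T(t)U\cap V\neq\emptyset$. This is precisely the mechanism behind part (4) of Remark \ref{remark1}, and it furnishes a dense orbit, which is condition (i). Together with the density of periodic points from the second step, $T(t)$ is chaotic.
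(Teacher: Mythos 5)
Your argument is correct and follows essentially the same route as the paper's: the Hahn--Banach/identity-theorem density lemma for $\spa\{F(\lambda):\lambda\in\Omega'\}$, applied to the three sets $\{\re\lambda<0\}$, $\{\re\lambda>0\}$ and the rational points of $\Omega\cap i\R$, combined with Remark \ref{remark1}(4). You merely spell out details the paper leaves to the reference (the common period via commensurable frequencies and the explicit transitivity mechanism), so there is nothing to add.
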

In  \cite{MR1468101} it was also required that the elements $F(\lambda)$, $\lambda \in \Omega$, are non-zero but as remarked in \cite{MR2128736} this assumption is redundant. \\

In order to make this paper more comprehensive, we include the idea of the proof.
\begin{proof}
 A major role in the proof is played by the following observation: let $U\subset \Omega$
 be any subset that contains an accumulation point. Then it follows  that the
 subset ${\cal B}_{U} = \spa\{ F(\lambda) : \lambda\in U\}$   is dense in ${\cal B}$.
 Indeed, if we suppose the contrary, by the Hahn-Banach Theorem 
 there exists some $\phi\in {\cal B'}, \phi\neq 0,$ such that $\phi\circ F(\lambda)=0$ for all
 $\lambda\in U$. As $U$ contains an accumulation point, it follows from the identity
 theorem for complex analytic functions that $F_{\phi}=0$. But this is a contradiction.\\
 For the subsets $U_0 = \{\lambda\in\Omega : \re(\lambda) < 0\}$,
 $U_{\infty} = \{ \lambda\in\Omega : \re(\lambda) > 0\}$, and
 $U_{per} = \Omega \cap i\Q$ it follows now
 ${\cal B}_{U_0} \subset {\cal B}_0$, ${\cal B}_{U_{\infty}} \subset {\cal B}_{\infty}$, and
 ${\cal B}_{U_{per}} \subset \{ f\in {\cal B} : \exists t>0 \mbox{~such that~} T(t)f=f \}$.
 As all these sets are dense in ${\cal B}$, the proof is complete.
\end{proof}
In the theory of dynamical systems chaotic semigroups are highly unwanted because of their
difficult dynamics. Not much more appreciated are so called subspace chaotic semigroups:
\begin{definition} 
  A strongly continuous semigroup $T(t)$ on a Banach space ${\cal B}$ is called {\em subspace 
  chaotic} if there is a closed, $T(t)$ invariant subspace ${\cal V}\neq \{0\}$ of  ${\cal B}$ such that
  the restriction $T(t)|_{\cal V}$ is a chaotic semigroup on ${\cal V}$.
\end{definition}
Because of Remark \ref{remark1} such a subspace is always infinite dimensional.

Banasiak and Moszy\'nski showed that a subset of the conditions in Theorem \ref{thm dsw} yield a sufficient  condition
for subspace chaos:
\begin{theorem}\textup{(\cite[Criterion 3.3]{MR2128736}).}\label{thm ban}
Let $T(t)$ denote a strongly continuous semigroup on a separable 
 Banach space ${\cal B}$ with generator $A$. Assume, there is an open, connected subset
 $\Omega\subset \C$ and a function $F: \Omega\to {\cal B}, F\neq 0,$ such that
 \begin{itemize}
  \item[\textup{(i)}] $\Omega \cap i\R \neq \emptyset$.
  \item[\textup{(ii)}] $F(\lambda) \in \ker(A-\lambda)$ for all $\lambda \in \Omega$.
  \item[\textup{(iii)}] For all $\phi \in {\cal B'}$, the mapping
  				$F_{\phi}:\Omega\to \C,\, \lambda\mapsto \phi\circ F $
				is analytic. 
 \end{itemize}
 Then the semigroup $T(t)$ is subspace chaotic.\\
 Furthermore, the restriction of $T(t)$ to the $T(t)$ invariant subspace 
 ${\cal V} = \overline{\spa}F(\Omega)$ is chaotic.	  
\end{theorem}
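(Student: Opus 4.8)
The plan is to derive Theorem~\ref{thm ban} as a consequence of the proof technique already used for Theorem~\ref{thm dsw}, observing that conditions (i)--(iii) here are exactly the hypotheses of that theorem with the crucial non-degeneracy requirement (the implication $F_\phi = 0 \Rightarrow \phi = 0$) dropped. The key conceptual point is that this missing hypothesis is precisely what guaranteed density in all of ${\cal B}$; without it, the span ${\cal V} = \overline{\spa}F(\Omega)$ may be a proper closed subspace, and the entire argument should be relocated to ${\cal V}$ rather than to ${\cal B}$.

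First I would verify that ${\cal V} = \overline{\spa}F(\Omega)$ is a closed, $T(t)$-invariant subspace of ${\cal B}$. Closedness is immediate since we take the closure of the span. For invariance, the essential observation is that $F(\lambda) \in \ker(A - \lambda)$ by (ii), so $F(\lambda)$ is an eigenvector of the generator and therefore $T(t)F(\lambda) = e^{\lambda t}F(\lambda) \in {\cal V}$; since $T(t)$ is bounded and linear, it maps the span into ${\cal V}$ and hence the closure into ${\cal V}$. Because $F \neq 0$, the subspace ${\cal V}$ is non-trivial. I would also note that ${\cal V}$, as a closed subspace of the separable space ${\cal B}$, is itself separable, so that Theorem~\ref{thm dsw} is applicable to the restricted system $T(t)|_{\cal V}$.

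Next I would apply Theorem~\ref{thm dsw} to the restricted semigroup $T(t)|_{\cal V}$ on ${\cal V}$, with the same $\Omega$ and the corestricted map $F : \Omega \to {\cal V}$. Condition (i) is unchanged. Condition (ii) holds because the generator of $T(t)|_{\cal V}$ agrees with $A$ on the relevant eigenvectors. The point requiring care is recovering the full hypothesis (iii) of Theorem~\ref{thm dsw}, including the non-degeneracy clause, now phrased for functionals $\psi \in {\cal V}'$. Given $\psi \in {\cal V}'$, the map $F_\psi : \lambda \mapsto \psi(F(\lambda))$ is analytic: by Hahn--Banach $\psi$ extends to some $\phi \in {\cal B}'$ with $\phi|_{\cal V} = \psi$, and analyticity of $F_\phi$ from the present (iii) transfers to $F_\psi$. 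The non-degeneracy now follows automatically: if $F_\psi = 0$ then $\psi(F(\lambda)) = 0$ for all $\lambda$, hence $\psi$ vanishes on $\spa F(\Omega)$ and by continuity on its closure ${\cal V}$, so $\psi = 0$. This is the main point where the restriction to ${\cal V}$ does the work: the degeneracy that Theorem~\ref{thm dsw} forbids has been quotiented out by construction.

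The principal obstacle, and the reason the argument is not entirely trivial, is the recovery of non-degeneracy just described—one must be careful that analyticity is a property of $F$ as a map into ${\cal B}$ (or ${\cal V}$) and that the Hahn--Banach extension genuinely preserves it, which it does because $F_\phi = (\phi|_{\cal V}) \circ F = F_\psi$ as scalar functions on $\Omega$. Once Theorem~\ref{thm dsw} applies to $T(t)|_{\cal V}$, it yields directly that $T(t)|_{\cal V}$ is chaotic, which is exactly the assertion that $T(t)$ is subspace chaotic with witnessing subspace ${\cal V} = \overline{\spa}F(\Omega)$, completing both claims of the theorem.
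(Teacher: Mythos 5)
Your reduction is correct and is essentially the paper's own (only sketched) argument: the paper merely remarks that the proof of Theorem~\ref{thm ban} is ``similar to'' that of Theorem~\ref{thm dsw}, and your restriction to ${\cal V}=\overline{\spa}F(\Omega)$, with Hahn--Banach extension of functionals $\psi\in{\cal V}'$ to ${\cal B}'$ and the now-automatic non-degeneracy ($F_\psi=0\Rightarrow\psi|_{\cal V}=0$), is precisely how that similarity is made rigorous. The one point worth making explicit is that Theorem~\ref{thm dsw} as stated assumes $\Omega\subset\sigma_{pt}(A)$, which is not guaranteed for $T(t)|_{\cal V}$ since $F$ may vanish at some points of $\Omega$; this is harmless because the paper's proof of Theorem~\ref{thm dsw} uses only conditions (i)--(iii) and never that hypothesis (equivalently, one may shrink $\Omega$ by the discrete zero set of $F$ without changing $\overline{\spa}F(\Omega)$ or losing $\Omega\cap i\R\neq\emptyset$).
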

The proof of this result is similar to the proof of Theorem \ref{thm dsw}. Note, that it is not required 
$\Omega\subset \sigma_{pt}(A)$ here, i.e. either $F(\lambda)$ is an eigenvector  or $F(\lambda)=0$ .  But, as explained in \cite{MR2128736}, the assumption $\Omega\subset \C$ is not really weaker.

\subsection{$\boldsymbol{L^p}$ heat dynamics on locally symmetric spaces}

\begin{theorem}\label{thm dynamics}
Let $M=\Gamma\backslash X$ denote a locally symmetric space with $\Q$-rank one.
\begin{itemize}
 \item[\textup{(a)}]
 If $p\in (1,2)$ there is a constant $c_p>0$ such that for any $c>c_p$ the semigroup
 $$
  e^{-t(\DMp -c)}: L^p(M) \to L^p(M)
 $$
 is subspace chaotic.
 \item[\textup{(b)}]
 If $p\geq 2$ and $c\in\R$ the semigroup $e^{-t(\DMp -c)}$ is not subspace chaotic.
\end{itemize}
\end{theorem}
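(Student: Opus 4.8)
The plan is to prove the two parts separately, using the spectral characterization of chaos (Theorems \ref{thm dsw} and \ref{thm ban}) for part (a) and a consistency/duality argument for part (b).

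For part (a), I would apply Theorem \ref{thm ban} (or equivalently Theorem \ref{thm dsw}) to the semigroup $T(t) = e^{-t(\DMp - c)}$, whose generator is $A = -(\DMp - c) = -\DMp + c$. The point spectrum of $A$ is $\{c - \lambda : \lambda \in \sigma_{pt}(\DMp)\}$, so by Corollary \ref{corollary spectrum} the interior of the shifted parabolic region $c - P_{M,p}$ (minus the discrete exceptional set $B$) consists of eigenvalues of $A$. The key geometric observation is that the parabolic region $P_{M,p}$ opens to the right and its vertex sits at $||\rho_{P}||^2 > 0$; after reflecting through $A$ and shifting by $c$, the region $c - P_{M,p}$ is a leftward-opening parabolic region whose vertex lies at $c - ||\rho_{P}||^2$. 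Choosing $c_p := ||\rho_{P}||^2$ guarantees that for any $c > c_p$ the open region $\Omega := \mathrm{int}(c - P_{M,p}) \setminus (c - B)$ genuinely crosses the imaginary axis, so condition (i), namely $\Omega \cap i\R \neq \emptyset$, holds. The eigenfunctions supplying the map $F$ are the Eisenstein series $E(P_{j(m)}|\varphi_m^\mu, \Lambda)$ from Theorem \ref{main theorem 1}: I would fix $\mu = 0$ (possible since the boundary locally symmetric space is compact) and parametrize $F$ by the spectral variable $\lambda = c - (||\rho_{P}||^2 - (\Lambda(H^0))^2)$, so that $F(\lambda) \in \ker(A - \lambda)$, giving condition (ii). Condition (iii), analyticity of $\phi \circ F$ for every $\phi \in (L^p(M))'= L^{p'}(M)$, follows because the Eisenstein series is holomorphic in $\Lambda$ (away from its poles) uniformly on compacta, and the pairing against a fixed $L^{p'}$ functional preserves this holomorphy; the discrete pole set is exactly what we removed by passing to $\Omega$. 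Since $F \not\equiv 0$, Theorem \ref{thm ban} then yields that $T(t)$ is subspace chaotic, with the chaotic subspace $\overline{\spa}\,F(\Omega)$.

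For part (b), the argument is structural rather than spectral. Suppose $p \geq 2$ and that $e^{-t(\DMp - c)}$ were subspace chaotic on some closed invariant subspace ${\cal V} \neq \{0\}$. By the definition of subspace chaos together with Remark \ref{remark1}(5), ${\cal V}$ would have to be infinite dimensional and the restricted semigroup would need eigenvectors of its generator filling an open connected region crossing the imaginary axis. But by Lemma \ref{lemma consistent} and the inclusion $L^p(M) \hookrightarrow L^2(M)$ (valid since $M$ has finite volume), any $L^p$ eigenfunction of $\DMp$ is simultaneously an $L^2$ eigenfunction of $\Delta_{M,2}$; hence $\sigma_{pt}(\DMp)$ is contained in the discrete $L^2$ point spectrum $\{\lambda_0, \lambda_1, \ldots\}$, which is a countable set of real numbers with no accumulation below any finite bound. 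The essential point is that a chaotic (or subspace chaotic) semigroup requires an \emph{uncountable} supply of eigenvalues accumulating along an open region of $\C$, as dictated by the identity-theorem argument in the proof of Theorem \ref{thm dsw}; a countable real point spectrum cannot support a non-constant holomorphic eigenvector map $F$ on any open $\Omega$. I would make this precise by noting that for the restricted semigroup on ${\cal V}$, topological transitivity (Remark \ref{remark1}(3)) forces the point spectrum of the restricted generator to meet $i\R$ in an accumulating fashion, contradicting the discreteness of the real point spectrum. This yields the desired non-existence.

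The main obstacle I anticipate is in part (a), condition (iii): one must verify that the eigenvector map $F$ is genuinely nonzero and weakly holomorphic as a $L^p(M)$-valued function of the spectral parameter, not merely pointwise. This requires that the parametrization $\Lambda \mapsto \Lambda(H^0)$ be a biholomorphism onto the relevant region (automatic since $\dim A_{P_j} = 1$, so $\mathfrak{a}_{P_j} \otimes \C \cong \C$), and that the $L^p$-norm of the Eisenstein series be locally bounded in $\Lambda$ so that weak holomorphy upgrades to the needed analyticity of each $\phi \circ F$; the $L^p$ estimates proved in Proposition \ref{prop main} are precisely the input that controls this. A subtlety to handle carefully is ensuring the chosen exceptional set $B$ (the common poles) is discrete enough that $\Omega$ remains open and connected after its removal and still crosses $i\R$ — this is where the explicit bound $c > c_p = ||\rho_{P}||^2$ and the location of the poles in $(0, ||\rho_{P}||]$ from Section \ref{L2 spectral theory} must be invoked to guarantee the crossing survives.
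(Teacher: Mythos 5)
Your part (a) follows essentially the paper's route (apply Theorem \ref{thm ban} with $F$ built from the Eisenstein series of Theorem \ref{main theorem 1} for $\mu=0$), but two details need correcting. First, the vertex of $P_{M,p}$ is not at $||\rho_{P}||^2$: taking $z$ real with $|z|=||\rho_{P}||(\frac{2}{p}-1)$ shows the apex sits at $c_p=||\rho_{P}||^2-||\rho_{P}||^2(\frac{2}{p}-1)^2=\frac{4||\rho_{P}||^2}{p}(1-\frac{1}{p})<||\rho_{P}||^2$. Your larger constant still proves the existence statement, so this is harmless, but the geometric claim as written is false. Second, removing $c-B$ from $\Omega$ is not quite enough: $B$ records only the \emph{common} poles of all Eisenstein series, whereas you need $F$ to be pole-free for the one Eisenstein series you actually use. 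The cleaner move, which the paper makes, is to take $\Omega$ to be the interior of $(P_{M,p}-c)\cap\{z:\im(z)<0\}$ and parametrize by $h(z)=i||\rho_{P}||^{-1}\sqrt{z+c-||\rho_{P}||^2}$, which places $\Lambda=h(z)\rho_{P_{j(1)}}$ in the strip $0<\re<\frac{2}{p}-1$ with nonzero imaginary part; since the poles with $\re(\Lambda)\geq 0$ lie in the real interval $(0,||\rho_{P}||]$, no poles are encountered and no excision or connectedness argument is needed.

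Part (b) contains a genuine gap. You assert that subspace chaos ``requires an uncountable supply of eigenvalues accumulating along an open region of $\C$, as dictated by the identity-theorem argument in the proof of Theorem \ref{thm dsw}.'' But Theorem \ref{thm dsw} is a \emph{sufficient} criterion; nothing about the point spectrum of a chaotic semigroup can be inferred from it, and your subsequent claim that topological transitivity forces the restricted generator's point spectrum to accumulate on $i\R$ is unsubstantiated (hypercyclic semigroups whose generators have empty point spectrum exist, so transitivity alone forces nothing). The ingredient actually required --- and the one the paper cites --- is the nontrivial \emph{necessary} condition that the generator of a chaotic strongly continuous semigroup has infinitely many eigenvalues on the imaginary axis (\cite{MR1855839} and its erratum). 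Combined with the part of your argument that is correct (by Lemma \ref{lemma consistent} and $L^p(M)\hookrightarrow L^2(M)$ the point spectrum of $\DMp$ for $p\geq 2$ is a discrete subset of $\R$, so the point spectrum of $-(\DMp-c)$ restricted to any closed invariant subspace meets $i\R$ in at most one point), this closes (b). As written, your derivation of the necessary spectral condition from the sufficient one is a converse error and does not complete the proof.
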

\begin{proof}
For the proof of part (a), we will check the conditions of Theorem  \ref{thm ban}.
If $p<2$, the interior of $P_{M,p}\cap\{ z\in\C : \im(z) <0\}$ consists completely of 
eigenvalues, cf. Corollary \ref{corollary spectrum}, and the apex of $P_{M,p}$
is at the point  
$$
 c_p = ||\rho_{P}||^2 - ||\rho_{P}||^2 \cdot \left(\frac{2}{p} - 1\right)^2 
         = \frac{4||\rho_{P}||^2}{p}\left(1 - \frac{1}{p}\right).
$$
Hence, the point spectrum of $(\DMp - c)$ intersects the imaginary axis for any $c>c_p$.
We assume in the following $c > c_p$ and denote by $\Omega$ the interior of the set
$$
  \left(P_{M,p} - c\right)\cap\{ z\in\C : \im(z) <0\}.
$$
Then, if the usual analytic branch of the square root is chosen, 
$\Omega$ is mapped (analytically) by 
$h(z) = i ||\rho_{P}||^{-1}\sqrt{z+c-||\rho_{P}||^2}$
onto the strip
$$
 \left\{z\in\C : \im(z)>0, 0<\re(z)<(\frac{2}{p}-1)  \right\}.
$$
If we now define
$$
 F: \Omega \to L^p(M),\, z\mapsto E(P_{j(1)}| \varphi_1^0, h(z)\rho_{P_{j(1)}})
$$
the map $F_f: \Omega\to \C, z\mapsto \int_M F(z)(x)f(x) dx$ is analytic as a composition of analytic mappings for all $f\in L^{p'}(M)$. Note, that the integral is always finite as the 
Eisenstein series $F(z)$ are contained in $L^p(M)$. Furthermore, it follows from
Theorem \ref{main theorem 1} that each $F(z)$ is an eigenfunction of $(\DMp-c)$
for the eigenvalue $z$ and the proof of part (a) is complete.\\

If $p\geq 2$, the point spectrum of $\DMp$, and hence of $(\DMp - c)$, is a discrete subset of $\R$.
On the other hand, the intersection of the point spectrum of the generator of a chaotic semigroup 
with the imaginary axis is always infinite, cf.  \cite{MR1855839} and its erratum.
\end{proof}
From the proof of  Theorem \ref{thm dynamics} we immediately obtain
\begin{corollary}\label{corollary chaotic}
Let $M=\Gamma\backslash X$ denote a locally symmetric space with $\Q$-rank one.
If $p\in (1,2)$ and $c>c_p$, the restriction of $e^{-t(\DMp -c)}$ to ${\cal V}$ is
chaotic for any of the subspaces
$$
 {\cal V} = 
 \overline{\spa}\left\{E(P_{j(m)}|\varphi_m^0,h(z)\rho_{P_{j(m)}}\,): z\in\Omega\right\}.
$$
\end{corollary}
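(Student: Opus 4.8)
The plan is to observe that the proof of Theorem~\ref{thm dynamics}(a) never used any special property of the index $m=1$: the concrete series $E(P_{j(1)}|\varphi_1^0,h(z)\rho_{P_{j(1)}})$ was chosen only to exhibit \emph{some} map $F$ meeting the hypotheses of Theorem~\ref{thm ban}, and the construction goes through verbatim with $1$ replaced by any fixed $m$. So I would fix $m$ and set
$$
 F_m:\Omega\to L^p(M),\qquad z\mapsto E(P_{j(m)}|\varphi_m^0,h(z)\rho_{P_{j(m)}}),
$$
with $\Omega$ and $h$ exactly as in that proof, and then check conditions (i)--(iii) of Theorem~\ref{thm ban} for $F_m$.

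Condition (i) is unchanged, since $\Omega$ is the same open connected set and $\Omega\cap i\R\neq\emptyset$ was already arranged for $c>c_p$. For (ii) I would apply Theorem~\ref{main theorem 1} with boundary eigenvalue $\mu=0$ and $\Lambda=h(z)\rho_{P_{j(m)}}$. Because $h$ maps $\Omega$ into the strip $\{\,0<\re(w)<\tfrac{2}{p}-1,\ \im(w)>0\,\}$ and $\rho_{P_{j(m)}}(H^0)=||\rho_P||$, one has $\re(\Lambda)(H^0)=\re(h(z))\,||\rho_P||$ with $0<\re(h(z))<\tfrac{2}{p}-1$, so the admissibility bound $|\re(\Lambda)(H^0)|<\tfrac{2-p}{p}||\rho_P||$ of Theorem~\ref{main theorem 1} holds; and since $\im(h(z))>0$, the parameter $\Lambda$ is never real, hence never a pole (the poles with $\re(\Lambda)\geq 0$ all lie in the real interval $(0,||\rho_P||]$, cf. Section~\ref{L2 spectral theory}). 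Substituting $h(z)^2=-||\rho_P||^{-2}(z+c-||\rho_P||^2)$ into the eigenvalue $||\rho_P||^2-(\Lambda(H^0))^2$ yields exactly $z+c$, so $F_m(z)\in\ker\bigl((\DMp-c)-z\bigr)$. Condition (iii), analyticity of $z\mapsto\int_M F_m(z)(x)f(x)\dx$ for every $f\in L^{p'}(M)$, follows as before from the holomorphy of the Eisenstein series in its spectral parameter, the analyticity of $h$, and the local $L^p$ bounds of Proposition~\ref{prop main}.

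With (i)--(iii) verified, the ``Furthermore'' clause of Theorem~\ref{thm ban} immediately gives that the restriction of $e^{-t(\DMp-c)}$ to ${\cal V}=\overline{\spa}\,F_m(\Omega)$ is chaotic, and by construction ${\cal V}=\overline{\spa}\{E(P_{j(m)}|\varphi_m^0,h(z)\rho_{P_{j(m)}}):z\in\Omega\}$, which is the asserted subspace; since $m$ was arbitrary this holds for each such ${\cal V}$. I do not expect a genuine obstacle, as the statement is essentially a reading-off of the proof of Theorem~\ref{thm dynamics}. The only point needing care is confirming that varying $m$ spoils neither the pole-avoidance nor the eigenvalue bookkeeping, and both are controlled uniformly because $||\rho_{P_1}||=\cdots=||\rho_{P_k}||$ and $\rho_{P_{j(m)}}(H^0)=||\rho_P||$ for every $m$, so the map $h$ and the target strip are the same regardless of which parabolic $P_{j(m)}$ is used.
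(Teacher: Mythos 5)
Your proposal matches the paper exactly: the paper derives this corollary directly from the proof of Theorem \ref{thm dynamics}(a), noting that the choice $m=1$ there was arbitrary and that Theorem \ref{thm ban} applies verbatim with $F_m(z)=E(P_{j(m)}|\varphi_m^0,h(z)\rho_{P_{j(m)}})$ in place of $F$. Your additional checks of the pole-avoidance and the eigenvalue bookkeeping are correct and only make explicit what the paper leaves implicit.
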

\begin{remark}
Let
$$
 {\cal V}_{j(m),\mu} =  
 	\overline{\spa}\left\{E(P_{j(m)}|\varphi_m^{\mu},h(z)\rho_{P_{j(m)}}\,): z\in\Omega\right\}.
$$
As the Eisenstein series for the eigenfunctions $\varphi_m^{\mu}$ lead to $L^p$ eigenvalues in 
the interior of $P_{M,p} + \mu$ it can be shown similarly that the semigroups
$$
 e^{-t(\DMp-c)}\Big|_{{\cal V}_{j(m),\mu}}
$$
are chaotic, if $c > c_p + \mu$.
\end{remark}
\begin{proposition}\label{products}
Let $M_i= \Gamma_i\backslash X_i, i=1,\ldots,k,$ denote locally symmetric spaces with 
$\Q$-rank one and $M=M_1\times\dots\times M_k$ their Riemannian product.
If $p\in (1,2)$ there are a constant $c_p>0$ and a closed $e^{-t\DMp}$-invariant 
subspace ${\cal V}\subset L^p(M)$ such that for all $c>c_p$ the semigroup
$e^{-t(\DMp-c)}\Big|_{\cal V}$ has dense orbits.
\end{proposition}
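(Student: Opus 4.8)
The plan is to build a single, explicit dense orbit for the product semigroup out of the chaotic dynamics on each rank-one factor $M_i$, exploiting the fact that on a Riemannian product the Laplacian splits as a sum $\DMp = \Delta_{M_1,p} \otimes I + \cdots + I \otimes \Delta_{M_k,p}$ and the heat semigroup factors as a tensor product $e^{-t\DMp} = e^{-t\Delta_{M_1,p}} \otimes \cdots \otimes e^{-t\Delta_{M_k,p}}$. First I would fix $c_p = \max_i c_{p}^{(i)}$, where $c_p^{(i)}$ is the apex constant for the factor $M_i$ supplied by Theorem \ref{thm dynamics}(a), so that for every $c > c_p$ each shifted factor semigroup $e^{-t(\Delta_{M_i,p}-c)}$ is subspace chaotic on its own space $L^p(M_i)$. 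By Corollary \ref{corollary chaotic} each factor carries a closed invariant subspace $\mathcal{V}_i \subset L^p(M_i)$ spanned by Eisenstein series on which the restricted semigroup is chaotic, and in particular has a dense orbit.

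Next I would set $\mathcal{V} = \overline{\mathcal{V}_1 \otimes \cdots \otimes \mathcal{V}_k}$ (the closed span in $L^p(M)$ of products $f_1(x_1)\cdots f_k(x_k)$ with $f_i \in \mathcal{V}_i$); this is closed and $e^{-t\DMp}$-invariant because the semigroup acts coordinatewise and each $\mathcal{V}_i$ is invariant. The key reduction is that a product semigroup has a dense orbit on a product (or tensor) space as soon as each factor semigroup has a dense orbit, provided we can synchronize the time parameters. The cleanest way to obtain this is to invoke the Desch--Schappacher--Webb eigenvalue criterion one more time, now on $\mathcal{V}$ directly: the product of eigenfunctions is again an eigenfunction, with eigenvalue the sum of the factor eigenvalues, so I can define a holomorphic eigenfunction map
$$
 F(z_1,\dots,z_k) = E(P_{j(1)}^{(1)}|\varphi_1^0, h_1(z_1)\rho) \otimes \cdots \otimes E(P_{j(1)}^{(k)}|\varphi_1^0, h_k(z_k)\rho)
$$
on the polydisc-type domain $\Omega_1 \times \cdots \times \Omega_k$, where each $\Omega_i$ and $h_i$ are exactly as in the proof of Theorem \ref{thm dynamics}. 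Summing the eigenvalues $z_1 + \cdots + z_k$ gives a map of the product domain into $\C$ whose image contains a nonempty open set meeting the imaginary axis (since each $\Omega_i$ already touches it), so the hypotheses (i)--(iii) of Theorem \ref{thm ban} are met and dense orbits follow.

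The step I expect to cause the most friction is verifying that the tensor-product structure of the spectral data transfers faithfully to $L^p$, rather than just $L^2$: I need that products of the $L^p$ Eisenstein eigenfunctions genuinely lie in $L^p(M)$ and are again $\DMp$-eigenfunctions in the sense of Corollary \ref{corollary Lp eigenfunctions}, and that analyticity of $F_\phi$ holds against the $L^{p'}$ dual. Membership in $L^p(M) = L^p(M_1)\,\widehat{\otimes}\,\cdots$ follows from Fubini and the single-factor estimates of Proposition \ref{prop main}, but one must be slightly careful that a general functional $\phi \in (L^p(M))'=L^{p'}(M)$ need not factor as a tensor; the analyticity of $\lambda \mapsto \phi\circ F$ nevertheless goes through by dominated convergence, differentiating under the integral sign using the uniform-on-compacta holomorphy of the Eisenstein series in each slot. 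Once these technical points are secured, Theorem \ref{thm ban} delivers a dense orbit for $e^{-t(\DMp-c)}|_{\mathcal V}$, completing the proof.
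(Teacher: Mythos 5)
Your route is genuinely different from the paper's: the paper restricts to $k=2$, identifies $L^p(M_1\times M_2)$ with the completed tensor product $L^p(M_1)\tilde{\otimes}_{g_p}L^p(M_2)$, takes the chaotic restrictions $T_i(t)=e^{-t(\Delta_{M_i,p}-c_i)}|_{{\cal V}_i}$ from Corollary \ref{corollary chaotic}, and then cites an external result (that a tensor product of chaotic/hypercyclic semigroups is hypercyclic) to get dense orbits of $T_1(t)\otimes T_2(t)$ on ${\cal V}_1\otimes{\cal V}_2$. You instead re-run the eigenvalue criterion directly on the product space. That is a legitimate alternative and would in fact yield the stronger conclusion of (subspace) chaos rather than just dense orbits, but as written it has a genuine quantitative error. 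Your choice $c_p=\max_i c_p^{(i)}$ is wrong, and the justification ``the image meets the imaginary axis since each $\Omega_i$ already touches it'' is the precise point of failure. The tensor product $F(z_1,\dots,z_k)$ is an eigenfunction of $\DMp$ with eigenvalue $\sum_i(z_i+c)$, hence of $\DMp-c$ with eigenvalue $\sum_i z_i+(k-1)c$; since every point of $P_{M_i,p}$ has real part at least $c_p^{(i)}$, this eigenvalue has real part strictly greater than $\sum_i c_p^{(i)}-c$. The sumset of parabolic regions has its apex at the \emph{sum} of the apexes, not at their maximum, so for $\max_i c_p^{(i)}<c\le\sum_i c_p^{(i)}$ the available eigenvalues never reach the imaginary axis, hypothesis (i) of Theorem \ref{thm ban} fails, and your argument proves nothing for such $c$. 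The paper takes $c_p=c_{p,1}+c_{p,2}$ and splits $c=c_1+c_2$ with $c_i>c_{p,i}$ for exactly this reason; you must do the same.

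A secondary, fixable gap: Theorem \ref{thm ban} is stated for an open connected $\Omega\subset\C$ and a map $\lambda\mapsto F(\lambda)\in\ker(A-\lambda)$, whereas your $F$ lives on $\Omega_1\times\cdots\times\Omega_k\subset\C^k$ and the eigenvalue is the non-injective function $(z_1,\dots,z_k)\mapsto\sum_i z_i+(k-1)c$. You must either restrict to a one-complex-dimensional slice (e.g.\ freeze $z_2,\dots,z_k$ and let $\lambda=\sum_i z_i+(k-1)c$ vary over a translate of $\Omega_1$, which still crosses $i\R$ once $c>\sum_i c_p^{(i)}$, at the price of a smaller invariant subspace ${\cal V}_1\otimes\C F_2\otimes\cdots$), or rerun the Hahn--Banach/identity-theorem argument of Theorem \ref{thm dsw} in several complex variables. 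Both repairs work, and your remaining technical points (membership of the product eigenfunctions in $L^p(M)$ via Fubini and Proposition \ref{prop main}, the eigenfunction property via the factorization $e^{-t\DMp}(f_1\otimes\cdots\otimes f_k)=\bigotimes_i e^{-t\Delta_{M_i,p}}f_i$, and analyticity of $z\mapsto\langle F(z),\phi\rangle$ for non-decomposable $\phi\in L^{p'}(M)$) are handled correctly. With $c_p$ corrected to $\sum_i c_p^{(i)}$ and the domain issue addressed, your proof goes through and is self-contained where the paper's relies on an external tensor-product hypercyclicity theorem.
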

\begin{proof}
 We restrict ourselves to the case $k=2$. By $L^p(M_1)\otimes L^p(M_2)$ we denote
 the tensor product of the spaces $L^p(M_1)$ and $L^p(M_2)$. For the uniform cross norm
 $g_p$ on this tensor product as in  \cite{Weber:2008ve}   the completion 
  $L^p(M_1)\tilde{\otimes}_{g_p} L^p(M_2)$ of the normed space
 $(L^p(M_1)\otimes L^p(M_2),g_p)$ coincides with $L^p(M_1\times M_2)$. Furthermore,
 we have
 $$
  e^{-t\Delta_{M_1\times M_2,p}} = e^{-t\Delta_{M_1,p}}\otimes e^{-t\Delta_{M_2,p}},
 $$
 cf. \cite{MR0348538,Weber:2008ve}. By Corollary \ref{corollary chaotic} the semigroups
 $$
 T_i(t) = e^{-t(\Delta_{M_i,p}-c_i)}\Big|_{{\cal V}_i}
 $$ 
 are chaotic if $c_i >c_{p,i}$ and the 
 subspaces ${\cal V}_i$ are chosen accordingly ($i=1,2$). Let now 
 $c_p= c_{p,1} + c_{p,2}, c  > c_p$ and choose $c_i,$ with $c_i> c_{p,i}, i=1,2$ and $c=c_1+c_2$. 
 Then it follows from \cite[Corollary 2.2]{Weber:yq} that the tensor product
 $T_1(t) \otimes T_2(t)$ on $L^p(M_1\times M_2) = L^p(M_1)\tilde{\otimes}_{g_p} L^p(M_2)$
 is a strongly continuous semigroup that has dense orbits (it is even recurrent hypercyclic).
 Hence,  the semigroup $e^{-t(\Delta_{M_1\times M_2,p}-c)} = 
  e^{-t(\Delta_{M_1,p}-c_1)}\otimes e^{-t(\Delta_{M_2,p}-c_2)}$
 restricted to the subspace ${\cal V} = {\cal V}_1\otimes {\cal V}_2 \subset L^p(M_1\times M_2)$
 has dense orbits. 
\end{proof}
\paragraph*{Acknowledgements} We want to thank the referee for many valuable comments, in particular, for pointing out that our results hold true also in the case of general rank one locally symmetric spaces with finite volume.
%

\bibliographystyle{amsplain}
\bibliography{dissertation,hypercyclic,symmetricSpaces}

\end{document}